\documentclass[11pt,a4paper]{amsart}
\usepackage{amssymb, amsthm, enumerate, amsmath, mathrsfs}

\usepackage{tikz}
\usetikzlibrary{calc,patterns,through}

\theoremstyle{plain}
\newtheorem{thm}{Theorem}[section]
\newtheorem{lemma}[thm]{Lemma}

\newtheorem{claim}{}[thm] 
\newenvironment{subproof}{\begin{proof}[Subproof.]}{\end{proof}}

\DeclareMathOperator{\fcl}{fcl}

\newcommand{\del}{\hspace{-0.5pt}\backslash}

\sloppy

\begin{document}
 \title{Displaying tangles and non-sequential separations}

\author{Ben Clark}
\address{Department of Mathematics, Louisiana State University, Baton Rouge, Louisiana, USA}
\email{bclark@lsu.edu}

\date{\today}

\begin{abstract}
We show that, for any graph or matroid, there is a tree that simultaneously distinguishes its maximal tangles, and, for each maximal tangle $\mathcal{T}$ that satisfies an additional robustness condition, displays all of the non-trivial separations relative to $\mathcal{T}$, up to a natural equivalence. 
\end{abstract}

\maketitle

\section{Introduction}

Tangles provide a means of locating the highly-connected pieces of a graph, matroid or, more generally, a connectivity system. Roughly speaking, a tangle of order $k$ does this by choosing a small side of each separation of order strictly less than $k$ in a compatible way. We say that $(X,Y)$ is a \textit{distinguishing separation} for a pair of tangles if they choose different small sides of $(X,Y)$. Tangles were introduced by Robertson and Seymour in \cite{robertson1991graph}, where they showed each graph $G$ has a tree decomposition that displays a minimum-order distinguishing separation for each pair of maximal tangles of $G$. This result was a key ingredient in their graph minors project, and the analogous result for connectivity systems, and hence matroids, was proved by Geelen, Gerards, and Whittle in \cite{geelen2009tangles}.



Consider a tangle $\mathcal{T}$ of order $k$ in a graph or matroid, and suppose that $(X,Y)$ is a $k$-separation such that neither $X$ nor $Y$ is contained in a small side of $\mathcal{T}$. We can think of $(X,Y)$ as a minimum-order separation that gives information about the structure of the highly-connected piece located by $\mathcal{T}$. Given such a separation $(X,Y)$, we say $X$ is \textit{sequential} with respect to $\mathcal{T}$ if there is an ordered partition $(Z_1,\ldots, Z_n)$ of $X$ such that, for each $i\in \{1,2,\ldots, n\}$, the set $Z_i$ is contained in a small side of $\mathcal{T}$ and $Y\cup Z_1\cup\cdots \cup Z_i$ is $k$-separating. We say that $(X,Y)$ is \textit{non-sequential} with respect to $\mathcal{T}$ if neither $X$ nor $Y$ is sequential with respect to $\mathcal{T}$. Clark and Whittle \cite{clark2011tangles} showed that, if $\mathcal{T}$ satisfies an additional necessary robustness condition, then, up to a natural equivalence, the $k$-separations that are non-sequential relative to $\mathcal{T}$ can be displayed in a tree-like way. This extended earlier results on the structure of $3$-separations in $3$-connected matroids \cite{oxley2004structure} and the structure of $4$-separations in $4$-connected matroids \cite{aikin2012structure}.  

In \cite{clark2011tangles}, the authors asked if it were possible to associate a tree with a graph or matroid that simultaneously displays a distinguishing separation for each pair of maximal tangles, and, for each maximal tangle, displays a representative of each non-sequential separation relative to the tangle in a tree-like way. The main result of this paper, Theorem \ref{main}, shows that this is indeed possible.  

In Section \ref{s2}, we provide the necessary preliminaries and give a precise statement of the main theorem. In Section \ref{s3}, we show that we can work with a connectivity system that allows us to handle the problem of crossing separations. Finally, in Section \ref{s4}, we prove the main theorem.

\section{Preliminaries and the main theorem}\label{s2}

Let $\lambda$ be an integer-valued function on the subsets of a finite set $E$. We call $\lambda$ \textit{symmetric} if $\lambda(X)=\lambda(E-X)$ for all $X\subseteq E$. We call $\lambda$ \textit{submodular} if $\lambda(X)+\lambda(Y)\geq \lambda(X\cup Y)+\lambda(X\cap Y)$ for all $X,Y\subseteq E$. If $\lambda$ is integer-valued, symmetric, and submodular, then $\lambda$ is called a \textit{connectivity function} on $E$. If $E$ is a finite set and $\lambda$ is a connectivity function on $E$, then the pair $(E,\lambda)$ is a \textit{connectivity system}. 

The results in this paper are stated for connectivity systems. Of course, we are primarily interested in connectivity systems because they arise naturally from matroids and graphs. Let $M$ be a matroid on ground set $E$ with rank function $r$. For $X\subseteq E$, we let $\lambda_M(X)=r(X)+r(E-X)-r(M)+1$. It is straightforward to prove that $(E,\lambda_M)$ is a connectivity system. Let $G$ be a graph with edge set $E$. For $X\subseteq E$, we let $\lambda_G(X)$ denote the number of vertices of $G$ that are incident with both an edge of $X$ and an edge of $E-X$. It is also straightforward to prove that $(E,\lambda_G)$ is a connectivity system. Thus for the results in this paper, we immediately obtain corollaries for matroids and graphs.

Let $K=(E,\lambda)$ be a connectivity system, and let $k$ be a positive integer. A partition $(X,E-X)$ of $E$ is called a \textit{$k$-separation} of $K$ if $\lambda(X)\leq k$. A subset $X$ of $E$ is said to be \textit{$k$-separating} in $\lambda$ if $\lambda(X)\leq k$. Two separations $(A,B)$ and $(C,D)$ \textit{cross} if all the intersections $A\cap C$, $A\cap D$, $B\cap C$, and $B\cap D$ are non-empty. The fact that separations can cross is the main obstacle to overcome to obtain this main result.

We now build towards a precise statement of the main result of \cite{geelen2009tangles}, which is essential  in the proof of the main theorem of this paper.

A \textit{tangle of order $k$} in $K=(E,\lambda)$ is a collection $\mathcal{T}$ of subsets of $E$ such that the following properties hold:
\begin{enumerate}
 \item[(T1)] $\lambda(A)<k$ for all $A\in \mathcal{T}$.
 \item[(T2)] If $(A,E-A)$ is a $(k-1)$-separation, then $\mathcal{T}$ contains $A$ or $E-A$.
 \item[(T3)] If $A,B,C\in \mathcal{T}$, then $A\cup B\cup C\not= E$.
 \item[(T4)] $E-\{e\}\notin \mathcal{T}$ for each $e\in E$.
\end{enumerate}

Let $k\geq 1$ be an integer, and let $\mathcal{T}$ be a tangle of order $k$ in a connectivity system $K=(E,\lambda)$. Now let $j\leq k$, and let $\mathcal{T}|_j\subseteq \mathcal{T}$ be the set of $A\in \mathcal{T}$ such that $\lambda(A)\leq j$.

\begin{lemma}
\cite[Lemma 4.1]{geelen2009tangles}
\label{truncate}
$\mathcal{T}|_j$ is a tangle of order $j$ in $(E,\lambda)$.
\end{lemma}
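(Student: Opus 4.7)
The plan is to verify the four tangle axioms (T1)--(T4) for $\mathcal{T}|_j$ one at a time, in each case reducing to the corresponding axiom for $\mathcal{T}$ together with the defining $\lambda$-bound for membership in $\mathcal{T}|_j$.

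The verifications of (T1), (T3), and (T4) are essentially immediate. Axiom (T1) for $\mathcal{T}|_j$ is built into the definition of $\mathcal{T}|_j$, since membership is characterized by a bound on $\lambda$. For (T3), any three sets $A,B,C \in \mathcal{T}|_j$ lie in $\mathcal{T}$, so (T3) applied to $\mathcal{T}$ gives $A \cup B \cup C \neq E$. For (T4), the inclusion $\mathcal{T}|_j \subseteq \mathcal{T}$ together with (T4) for $\mathcal{T}$ immediately rules out $E - \{e\}$ from $\mathcal{T}|_j$ for every $e \in E$.

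The only step with any content is (T2). Given a $(j-1)$-separation $(A, E - A)$, I would use the hypothesis $j \leq k$ to observe that $\lambda(A) \leq j-1 \leq k-1$, so $(A, E-A)$ is also a $(k-1)$-separation. Then (T2) applied to $\mathcal{T}$ yields $A \in \mathcal{T}$ or $E - A \in \mathcal{T}$. Symmetry of $\lambda$ guarantees $\lambda(E-A) = \lambda(A) \leq j-1 \leq j$, so whichever of the two sets lies in $\mathcal{T}$ automatically meets the $\lambda$-bound defining $\mathcal{T}|_j$ and hence lies in $\mathcal{T}|_j$.

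There is no substantive obstacle here; the four axioms are monotone in the order parameter, so truncation preserves the tangle property, and the argument is largely bookkeeping. The only place where the hypothesis $j \leq k$ is genuinely needed is in (T2), to bridge $(j-1)$-separations to $(k-1)$-separations; everywhere else, inclusion in $\mathcal{T}$ together with the $\lambda$-bound defining $\mathcal{T}|_j$ does the work.
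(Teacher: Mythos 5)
The paper does not prove this lemma; it is quoted verbatim from Geelen, Gerards, and Whittle, so there is no in-paper argument to compare against. Your axiom-by-axiom verification is the standard one and is essentially correct: (T3) and (T4) follow from the inclusion $\mathcal{T}|_j\subseteq\mathcal{T}$, and your treatment of (T2) is exactly right --- the hypothesis $j\leq k$ turns a $(j-1)$-separation into a $(k-1)$-separation, and symmetry of $\lambda$ ensures that whichever side $\mathcal{T}$ contains also satisfies the defining bound for $\mathcal{T}|_j$.

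One caveat concerns (T1), which you dismiss as ``built into the definition.'' As this paper literally defines it, $\mathcal{T}|_j$ consists of the $A\in\mathcal{T}$ with $\lambda(A)\leq j$, whereas (T1) for a tangle of order $j$ demands the strict inequality $\lambda(A)<j$; a set $A\in\mathcal{T}$ with $\lambda(A)=j$ would belong to $\mathcal{T}|_j$ but violate (T1). So under the definition as printed, (T1) is not automatic --- the lemma only holds if the truncation is read as $\{A\in\mathcal{T}:\lambda(A)<j\}$ (equivalently $\lambda(A)\leq j-1$), which is the convention in the cited source and is clearly what is intended here. This is an off-by-one slip in the paper's setup rather than a flaw in your reasoning, but since your proof leans on (T1) being definitional, it is worth making the strict bound explicit rather than asserting that membership is ``characterized by a bound on $\lambda$'' without saying which bound.
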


We say that $\mathcal{T}|_j$ is the \textit{truncation} of $\mathcal{T}$ to order $j$. If $\mathcal{T}_1$ and $\mathcal{T}_2$ are tangles, neither of which is a truncation of the other, then there is some \textit{distinguishing separation} $(X_1,X_2)$ with $X_1\in \mathcal{T}_1$ and $X_2\in \mathcal{T}_2$. 

A \textit{tree-decomposition} of $E$ is a tree $T$ with $V(T)=\{1,2,\ldots,n\}$ and a partition $(P_1,P_2,\ldots, P_n)$ of $E$ (where $P_1,P_2,\ldots, P_n$ are called \textit{bags}, and may be empty). Let $e$ be an edge of $T$, and let $T_1$ and $T_2$ be the components of $T\del e$. The separation $(\bigcup_{i\in V(T_1)} P_i, \bigcup_{i\in V(T_2)} P_i)$ of $K$ is said to be \textit{displayed by e}. A separation of $K$ is \textit{displayed by $T$} if it is displayed by some edge of $T$. We can now state the main result of \cite{geelen2009tangles}.

\begin{thm}
 \label{treeoftangles}\cite[Theorem 9.1]{geelen2009tangles}
Let $K=(E,\lambda)$ be a connectivity system, and let $\mathcal{T}_1, \ldots \mathcal{T}_n$ be tangles in $K$, none of which is a truncation of another. Then there exists a tree-decomposition $T$ of $\lambda$ such that $V(T)=\{1,2,\ldots,n\}$ and such that the following hold:
\begin{enumerate}
 \item[(a)] For each $i\in V(T)$ and $e\in E(T)$ if $T'$ is the component of $T\del e$ containing $i$, then the union of those bags that label vertices of $T'$ is not a member of $\mathcal{T}_i$.
 \item[(b)] For each pair of distinct vertices $i$ and $j$ of $T$, there exists a minimum-order distinguishing separation for $\mathcal{T}_i$ and $\mathcal{T}_j$ that is displayed by $T$. 
\end{enumerate}
\end{thm}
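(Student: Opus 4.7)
The plan is to build the tree $T$ from a pairwise non-crossing family of distinguishing separations chosen of minimum order for each pair. For each pair $\{i,j\}$, let $k_{ij}$ be the minimum order of a distinguishing separation for $\mathcal{T}_i$ and $\mathcal{T}_j$, and begin with any choice of a separation $S_{ij}=(A_{ij},B_{ij})$ of order $k_{ij}$ with $A_{ij}\in\mathcal{T}_i$ and $B_{ij}\in\mathcal{T}_j$. The strategy is to modify $\{S_{ij}\}$ by \emph{uncrossing} until the whole family is pairwise non-crossing while the order $k_{ij}$ is preserved for each pair. A non-crossing family of separations of $E$ carries a laminar structure that translates, in the standard way, into a tree-decomposition of $E$; collapsing any unlabelled regions yields a tree $T$ on $V(T)=\{1,\ldots,n\}$ in which each $S_{ij}$ is displayed by some edge on the $i$-to-$j$ path.

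The main obstacle, and the technical core of the proof, is the uncrossing step. Suppose two chosen separations $(A,B)$ and $(C,D)$ cross. Submodularity of $\lambda$ provides the corner inequality
\[
\lambda(A\cap C)+\lambda(A\cup C)\leq\lambda(A)+\lambda(C)
\]
together with three symmetric counterparts. Combined with the tangle axioms---particularly (T3), which forbids three sets of a tangle from covering $E$, and (T2), which forces a tangle of sufficient order to choose one side of any small-order separation---these inequalities let us replace one of $(A,B),(C,D)$ by a non-crossing substitute that still distinguishes the same pair and has order no larger than the original. Progress is measured by a strictly decreasing potential, for instance the lexicographically ordered multiset of pairwise crossing counts in the current family, so the procedure terminates.

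With a pairwise non-crossing family in hand, form $T$ by placing vertex $i$ on the side of each separation that $\mathcal{T}_i$ regards as \emph{big} (i.e., the complement of the element of $\mathcal{T}_i$). Condition (b) then holds because for each pair $(i,j)$, the path from $i$ to $j$ in $T$ contains an edge that displays $S_{ij}$, which has minimum order $k_{ij}$. For condition (a), let $i\in V(T)$ and $e\in E(T)$, and write the separation displayed by $e$ as $(X,Y)$ with $Y$ the side containing $i$. If $\lambda(X)\geq k_i$ (the order of $\mathcal{T}_i$), then $Y\notin\mathcal{T}_i$ by (T1), since $\lambda(Y)=\lambda(X)\geq k_i$. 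Otherwise $\lambda(X)<k_i$, and (T2) forces one of $X,Y$ into $\mathcal{T}_i$; the chosen placement of $i$ ensures $X\in\mathcal{T}_i$, and (T3) then prevents $Y$ from also lying in $\mathcal{T}_i$ (else $X\cup Y\cup X=E$ contradicts (T3)), so $Y\notin\mathcal{T}_i$ as required.
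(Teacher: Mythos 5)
This statement is quoted from Geelen--Gerards--Whittle \cite{geelen2009tangles} and the paper gives no proof of it, so your proposal can only be measured against the known argument. Your overall architecture (choose a minimum-order distinguishing separation for each pair, make the family pairwise non-crossing, read off a tree-decomposition) is the right shape, but the proposal has a genuine gap: the uncrossing step, which you correctly identify as ``the technical core,'' is asserted rather than carried out, and it is essentially the entire content of the theorem. Given crossing separations $(A,B)$ and $(C,D)$ distinguishing $\{\mathcal{T}_i,\mathcal{T}_j\}$ and $\{\mathcal{T}_k,\mathcal{T}_l\}$, submodularity only tells you that some corner is cheap; you still must show that a cheap corner again \emph{distinguishes the same pair} with the tangles oriented the right way (which needs (T2) applied to the corner inside each of the four tangles, case analysis on whether $\{i,j\}$ and $\{k,l\}$ intersect, and a comparison of the orders $k_{ij}$ and $k_{kl}$), and that its order is exactly $k_{ij}$. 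None of this is routine, and no potential function rescues an argument whose single step is unverified. Moreover, your proposed termination measure is suspect as stated: replacing $(A,B)$ by a corner such as $(A\cap C,\,B\cup D)$ can introduce crossings with a third separation $(P,Q)$ that $(A,B)$ did not cross (namely those inherited from $(C,D)$), so the raw count of crossing pairs need not decrease. This is precisely why the proof in \cite{geelen2009tangles} (following Robertson and Seymour) does not uncross an arbitrary initial family: it selects for each pair an \emph{extremal} minimum-order distinguishing separation (e.g.\ with the $\mathcal{T}_i$-side inclusion-minimal) and proves that two such extremal choices cannot cross at all, then assembles the tree inductively.

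Two smaller points. First, in verifying (a) you place vertex $i$ ``on the big side of each separation''; for separations of order at least the order of $\mathcal{T}_i$ the tangle has no opinion, so the tangle only locates a subtree rather than a vertex, and you must still argue that distinct tangles can be assigned distinct vertices, that the result has exactly $n$ vertices, and that the bags partition $E$ --- routine, but part of what the statement asserts. Second, the consistency of the orientation (that the ``big sides'' chosen by $\mathcal{T}_i$ all point toward a common node) itself uses that a tangle never contains two sets covering $E$, which follows from (T3); this should be said, since it is what makes the placement well defined.
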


We now introduce the terminology necessary to state the main result of \cite{clark2011tangles}. In what follows, unless explicitly stated otherwise, we let $K=(E,\lambda)$ be a connectivity system, and let $\mathcal{T}$ be a tangle of order $k$ in $K$.

A subset $X$ of $E$ is \textit{$\mathcal{T}$-strong} if it is not contained in a member of $\mathcal{T}$; otherwise $X$ is \textit{$\mathcal{T}$-weak}. It is easy to see that supersets of $\mathcal{T}$-strong sets are $\mathcal{T}$-strong, and that subsets of $\mathcal{T}$-weak sets are $\mathcal{T}$-weak. A partition $(X_1,\ldots, X_n)$ of $E$ is \textit{$\mathcal{T}$-strong} if $X_i$ is a $\mathcal{T}$-strong set for all $i\in \{1,2,\ldots,n\}$. In particular, a $k$-separation $(X,E-X)$ of $\lambda$ is \textit{$\mathcal{T}$-strong} if both $X$ and $E-X$ are $\mathcal{T}$-strong sets. We note that if a partition $\{X,E-X\}$ of $E$ is $\mathcal{T}$-strong, then neither $X$ nor $E-X$ is a member of $\mathcal{T}$, so $\lambda(X)\geq k$ by (T2). Thus a $\mathcal{T}$-strong $k$-separation $(X,E-X)$ is exact.

A $\mathcal{T}$-strong $k$-separating set $X$ is \textit{fully closed with respect to $\mathcal{T}$} if $X\cup Y$ is not $k$-separating for every non-empty $\mathcal{T}$-weak set $Y\subseteq E-X$. Let $X$ be a $\mathcal{T}$-strong $k$-separating set. Then the intersection of all fully-closed $k$-separating sets that contain $X$, which we denote by $\fcl_{\mathcal{T}}(X)$, is called the \textit{full closure of $X$ with respect to $\mathcal{T}$}. We use the full closure with respect to $\mathcal{T}$ to define equivalence of $k$-separations. Let $(X,Y)$ and $(X',Y')$ be $\mathcal{T}$-strong $k$-separations of $\lambda$. Then $(X,Y)$ is $\mathcal{T}$-\textit{equivalent} to $(X',Y')$ if $\{\fcl_{\mathcal{T}}(X),\fcl_{\mathcal{T}}(Y)\}= \{\fcl_{\mathcal{T}}(X'),\fcl_{\mathcal{T}}(Y')\}$. We say that $E-X$ is $\mathcal{T}$-\textit{sequential} if $\fcl_{\mathcal{T}}(X)=E$. A $k$-separation $(X,Y)$ is $\mathcal{T}$-\textit{sequential} if $X$ or $Y$ is a $\mathcal{T}$-sequential $k$-separating set; otherwise $(X,Y)$ is \textit{non-sequential with respect to $\mathcal{T}$}. A \textit{partial $k$-sequence for $X$} is a sequence $(X_i)_{i=1}^m$ of pairwise disjoint, non-empty $\mathcal{T}$-weak subsets of $E-X$ such that $X\cup (\bigcup_{i=1}^j X_i)$ is $k$-separating for all $j\in \{1,2,\ldots,m\}$. A partial $k$-sequence $(X_i)_{i=1}^m$ for $X$ is said to be \textit{maximal} if $X\cup (\bigcup_{i=1}^m X_i)$ is inclusion-wise maximal. A maximal partial $k$-sequence is simply called a \textit{$k$-sequence}. We use the following characterisation full closure with respect to $\mathcal{T}$.   

\begin{lemma}
\cite[Lemma 3.6]{clark2011tangles} 
\label{maxsequence}
Let $\mathcal{T}$ be a tangle of order $k$ in a connectivity system $(E,\lambda)$. Let $X$ be a $\mathcal{T}$-strong $k$-separating set, and let $(X_i)_{i=1}^m$ be a partial $k$-sequence for $X$. Then $\fcl_{\mathcal{T}}(X)=X\cup (\bigcup_{i=1}^m X_i)$ if and only if $(X_i)_{i=1}^m$ is a $k$-sequence.
\end{lemma}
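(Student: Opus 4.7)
The plan is to establish both directions of the biconditional via a single key claim: \emph{for every partial $k$-sequence $(Y_i)_{i=1}^m$ for $X$, every fully-closed $k$-separating set $F$ containing $X$ also contains $X \cup \bigcup_{i=1}^m Y_i$; consequently $X \cup \bigcup_{i=1}^m Y_i \subseteq \fcl_{\mathcal{T}}(X)$.} Granted the claim, both directions follow easily. For the backward implication, if $(X_i)_{i=1}^m$ is a $k$-sequence and $W := X \cup \bigcup_{i=1}^m X_i$, then $W$ is $k$-separating by hypothesis, $\mathcal{T}$-strong because it contains $X$, and fully closed because maximality rules out any non-empty $\mathcal{T}$-weak extension remaining $k$-separating. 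Thus $W$ is itself one of the sets in the intersection defining $\fcl_{\mathcal{T}}(X)$, giving $\fcl_{\mathcal{T}}(X) \subseteq W$, while the claim supplies the reverse inclusion. For the forward implication, suppose $\fcl_{\mathcal{T}}(X) = X \cup \bigcup_{i=1}^m X_i$ but $(X_i)$ is not a $k$-sequence; extend it to a partial $k$-sequence $(X_i)_{i=1}^{m+1}$ with $X_{m+1}$ non-empty, and the claim gives $X_{m+1} \subseteq \fcl_{\mathcal{T}}(X) = X \cup \bigcup_{i=1}^m X_i$, contradicting the disjointness in the definition of a partial $k$-sequence.

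I would prove the key claim by induction on $m$, the base $m = 0$ being trivial. Writing $W_j := X \cup \bigcup_{i=1}^j Y_i$ and assuming $W_{m-1} \subseteq F$ by induction, submodularity applied to $F$ and $W_m = W_{m-1} \cup Y_m$ gives
\[
  2k \;\geq\; \lambda(F) + \lambda(W_m) \;\geq\; \lambda(F \cup Y_m) \;+\; \lambda\bigl(W_{m-1} \cup (F \cap Y_m)\bigr).
\]
If $\lambda\bigl(W_{m-1} \cup (F \cap Y_m)\bigr) \geq k$, then $\lambda(F \cup Y_m) \leq k$; since $Y_m \setminus F$ is a $\mathcal{T}$-weak subset of $E - F$ and $F$ is fully closed, this forces $Y_m \setminus F = \emptyset$, as desired.

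The main obstacle is to rule out the possibility $\lambda(S) < k$, where $S := W_{m-1} \cup (F \cap Y_m)$. Since $S$ contains $X$, it is $\mathcal{T}$-strong, so $S \notin \mathcal{T}$ by (T1); if $\lambda(S) < k$, then (T2) places $E - S \in \mathcal{T}$. The plan is then to combine $E - S$ with a tangle member $T \supseteq Y_m$ (whose existence witnesses the $\mathcal{T}$-weakness of $Y_m$) and a third tangle member covering $W_{m-1} \setminus T$, contradicting (T3). Producing this third tangle member is the technical heart of the argument: neither $W_{m-1}$ nor $F$ is guaranteed to have order strictly less than $k$, so (T2) does not directly supply the element, and the excess $W_{m-1} \setminus T$ need not lie inside any single tangle member $T_i \supseteq Y_i$ witnessing the $\mathcal{T}$-weakness of an earlier $Y_i$. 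I expect to overcome this by iteratively uncrossing $E - S$ with the $T_i$, each step either shrinking the excess or yielding a $(k-1)$-separation to which (T2) can finally be applied; once this dance is complete, the rest of the lemma reduces to routine bookkeeping.
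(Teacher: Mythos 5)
This paper only cites the lemma from \cite{clark2011tangles} without reproducing a proof, so I am judging your argument on its own terms. Your overall architecture is sound and matches the structure of the source: everything reduces to the key claim that $X \cup \bigcup_{i=1}^m Y_i$ lies in every fully closed $k$-separating set $F \supseteq X$ (this claim is essentially Lemma \ref{fclcontaincor} of the present paper), and your derivation of both directions of the biconditional from that claim is fine, modulo two small points: $S \notin \mathcal{T}$ follows from $S$ being $\mathcal{T}$-strong (a member of $\mathcal{T}$ contains itself), not from (T1), which in the case $\lambda(S) < k$ rules out nothing; and in the forward direction ``not a $k$-sequence'' means the union is not inclusion-wise maximal, which gives you some partial $k$-sequence with strictly larger union rather than a one-term extension of $(X_i)$ --- apply the claim to that sequence instead. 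The submodularity computation and the case $\lambda(S) \geq k$ are correct.

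The genuine gap is the case $\lambda(S) < k$, which you explicitly leave open, and the route you sketch there cannot be completed. Any violation of (T3) requires three members of $\mathcal{T}$ whose union is $E$, hence whose union covers the $\mathcal{T}$-strong set $X$; but $X \subseteq S$, so $E - S$ misses all of $X$, and neither $T \supseteq Y_m$ nor any of the witnesses $T_i \supseteq Y_i$ is guaranteed to cover what remains of $X$ --- no amount of uncrossing manufactures a cover of a $\mathcal{T}$-strong set by tangle members. The correct resolution is shorter and avoids (T3) entirely. Since $S \supseteq X$ is $\mathcal{T}$-strong, $S \notin \mathcal{T}$, so (T2) gives $E - S \in \mathcal{T}$; as $S \subseteq F$, the set $E - F$ is contained in $E - S$ and is therefore $\mathcal{T}$-weak, and it is non-empty because $Y_m \setminus F \neq \emptyset$ in this case. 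But $E = F \cup (E - F)$ is $k$-separating: picking any $T \in \mathcal{T}$ (one exists, e.g.\ a witness for the $\mathcal{T}$-weakness of $Y_m$), submodularity applied to $T$ and $E - T$ gives $\lambda(E) = \lambda(\emptyset) \leq \lambda(T) < k$. Thus $F \cup (E-F)$ is a $k$-separating extension of $F$ by a non-empty $\mathcal{T}$-weak subset of $E - F$, contradicting the assumption that $F$ is fully closed. With this substitution your induction closes and the lemma follows.
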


We use the following lemmas on $\mathcal{T}$-equivalence.

\begin{lemma}
\cite[Lemma 3.7]{clark2011tangles} 
 \label{3.3}
Let $\mathcal{T}$ be a tangle of order $k$ in a connectivity system $K$, and let $(A, B)$ and $(C, D)$ be two $k$-separations of $K$ that are non-sequential with respect to $\mathcal{T}$. Then $(A, B)$ is $\mathcal{T}$-equivalent to $(C, D)$ if and only if either $\fcl_{\mathcal{T}}(A)= \fcl_{\mathcal{T}}(C)$ or $\fcl_{\mathcal{T}}(A)= \fcl_{\mathcal{T}}(D)$.
\end{lemma}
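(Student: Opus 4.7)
The plan is to handle the forward direction trivially and put all the work into the reverse direction. The forward direction is immediate from the definition of $\mathcal{T}$-equivalence: if $\{\fcl_{\mathcal{T}}(A),\fcl_{\mathcal{T}}(B)\}=\{\fcl_{\mathcal{T}}(C),\fcl_{\mathcal{T}}(D)\}$, then $\fcl_{\mathcal{T}}(A)$ coincides with one of $\fcl_{\mathcal{T}}(C)$ or $\fcl_{\mathcal{T}}(D)$. For the reverse direction I would assume, without loss of generality, that $\fcl_{\mathcal{T}}(A)=\fcl_{\mathcal{T}}(C)$ and aim to show $\fcl_{\mathcal{T}}(B)=\fcl_{\mathcal{T}}(D)$. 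My main target is a stronger structural claim: for any non-sequential $\mathcal{T}$-strong $k$-separation $(A,B)$, the set $\fcl_{\mathcal{T}}(B)$ is determined by $\fcl_{\mathcal{T}}(A)$ alone, via the explicit identity $\fcl_{\mathcal{T}}(B)=\fcl_{\mathcal{T}}(E\setminus\fcl_{\mathcal{T}}(A))$. Once this is proved, applying it to both $(A,B)$ and $(C,D)$ yields the lemma instantly.

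To establish the identity, I would set $A^{*}=\fcl_{\mathcal{T}}(A)$ and $B'=E\setminus A^{*}$, and then invoke Lemma~\ref{maxsequence} to fix a $k$-sequence $(X_{i})_{i=1}^{m}$ for $A$ satisfying $A^{*}=A\cup X_{1}\cup\cdots\cup X_{m}$. Non-sequentiality of $(A,B)$ ensures $A^{*}\neq E$, so $B'$ is non-empty. Before I can even write $\fcl_{\mathcal{T}}(B')$ I need the preliminary fact that $B'$ is $\mathcal{T}$-strong: if instead $B'$ were $\mathcal{T}$-weak, it would be a non-empty $\mathcal{T}$-weak subset of $E\setminus A^{*}$ with $A^{*}\cup B'=E$ being $k$-separating, directly contradicting the definition of full closure applied to $A^{*}$. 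Since $\lambda$ is symmetric, $B'$ is also $k$-separating, and hence $\fcl_{\mathcal{T}}(B')$ is well-defined.

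The final step is to reverse the $k$-sequence. For each $j\in\{0,1,\ldots,m-1\}$ the complement $B'\cup X_{m}\cup\cdots\cup X_{j+1}$ of $A\cup X_{1}\cup\cdots\cup X_{j}$ is $k$-separating by symmetry of $\lambda$, so $(X_{m},X_{m-1},\ldots,X_{1})$ is a partial $k$-sequence for $B'$. It follows that $B=B'\cup X_{1}\cup\cdots\cup X_{m}\subseteq\fcl_{\mathcal{T}}(B')$, which forces $\fcl_{\mathcal{T}}(B)\subseteq\fcl_{\mathcal{T}}(B')$ because $\fcl_{\mathcal{T}}(B')$ is a fully-closed $k$-separating set containing $B$. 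The reverse inclusion follows from $B'\subseteq B$, giving $\fcl_{\mathcal{T}}(B)=\fcl_{\mathcal{T}}(B')$, a set depending only on $\fcl_{\mathcal{T}}(A)$. Applying this identity to $(C,D)$ and using the hypothesis $\fcl_{\mathcal{T}}(A)=\fcl_{\mathcal{T}}(C)$ then yields $\fcl_{\mathcal{T}}(B)=\fcl_{\mathcal{T}}(D)$. I expect the main obstacle to be the step verifying that $B'$ is $\mathcal{T}$-strong; this is the precise point where non-sequentiality is essential, and it is the only place where the interaction between full closure and the tangle axioms does real work.
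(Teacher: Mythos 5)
Your proof is correct and follows essentially the same route as the cited source: the heart of the matter is the identity $\fcl_{\mathcal{T}}(B)=\fcl_{\mathcal{T}}(E\setminus\fcl_{\mathcal{T}}(A))$ for a non-sequential $k$-separation $(A,B)$, obtained by reversing a $k$-sequence for $A$ via symmetry of $\lambda$, and your reversal argument is sound. The only step left implicit is that $\fcl_{\mathcal{T}}(A)$ is itself a fully closed $k$-separating set (needed when you derive a contradiction from the definition of full closure applied to $A^{*}$, and again when you use $\fcl_{\mathcal{T}}(B')$ as a fully closed set containing $B$); this follows immediately from Lemma~\ref{maxsequence}, since a $k$-sequence is by definition non-extendable.
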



\begin{lemma}
\cite[Lemma 3.8]{clark2011tangles} 
 \label{elemseq}
Let $\mathcal{T}$ be a tangle of order $k$ in $K$. Let $(R,G)$ be a $k$-separation of $K$ that is non-sequential with respect to $\mathcal{T}$, and let $A\subseteq G$ be a non-empty $\mathcal{T}$-weak set. If $R\cup A$ is a $k$-separating set, then $(R,G)$ is $\mathcal{T}$-equivalent to $(R\cup A, G-A)$.
\end{lemma}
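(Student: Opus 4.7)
The plan is to apply Lemma \ref{3.3}, for which it suffices to establish that $(R\cup A, G-A)$ is a $\mathcal{T}$-strong $k$-separation non-sequential with respect to $\mathcal{T}$, and that $\fcl_{\mathcal{T}}(R) = \fcl_{\mathcal{T}}(R\cup A)$. The backbone of the full-closure computations will be Lemma \ref{maxsequence}. I would first observe that $(A)$ is a partial $k$-sequence for $R$: $A$ is a non-empty $\mathcal{T}$-weak subset of $G = E - R$, and $R \cup A$ is $k$-separating by hypothesis. Extending $(A)$ to a $k$-sequence $(A, X_2, \ldots, X_m)$ and invoking Lemma \ref{maxsequence} gives $\fcl_{\mathcal{T}}(R) = R \cup A \cup X_2 \cup \cdots \cup X_m$, so in particular $R \cup A \subseteq \fcl_{\mathcal{T}}(R)$. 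Since $R$ is $\mathcal{T}$-strong, so is its superset $R \cup A$, and hence $\fcl_{\mathcal{T}}(R \cup A)$ is defined. Because $\fcl_{\mathcal{T}}(R)$ is a fully closed $k$-separating set containing $R \cup A$, minimality of the full closure yields $\fcl_{\mathcal{T}}(R \cup A) \subseteq \fcl_{\mathcal{T}}(R)$; the reverse inclusion is immediate from $R \subseteq R \cup A \subseteq \fcl_{\mathcal{T}}(R \cup A)$. Hence $\fcl_{\mathcal{T}}(R) = \fcl_{\mathcal{T}}(R \cup A)$.

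Next I would verify that $(R \cup A, G - A)$ is a $\mathcal{T}$-strong $k$-separation. The bound $\lambda(R \cup A) \leq k$ is given, and $G - A$ is non-empty because $G$ is $\mathcal{T}$-strong and hence not $\mathcal{T}$-weak, forcing $A \subsetneq G$. The delicate point --- and the step I expect to be the main obstacle --- is $\mathcal{T}$-strength of $G - A$. For this I would use non-sequentiality of $(R,G)$ to ensure $\fcl_{\mathcal{T}}(R) \neq E$, so $E - \fcl_{\mathcal{T}}(R)$ is non-empty; were it $\mathcal{T}$-weak, then taking $Y = E - \fcl_{\mathcal{T}}(R)$ would give $\fcl_{\mathcal{T}}(R) \cup Y = E$ $k$-separating, contradicting full-closedness of $\fcl_{\mathcal{T}}(R)$. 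Hence $E - \fcl_{\mathcal{T}}(R)$ is $\mathcal{T}$-strong, and the inclusion $E - \fcl_{\mathcal{T}}(R) \subseteq G - A$ transfers $\mathcal{T}$-strength to $G - A$.

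To finish, I would check that $(R \cup A, G - A)$ is non-sequential: $\fcl_{\mathcal{T}}(R \cup A) = \fcl_{\mathcal{T}}(R) \neq E$, and $\fcl_{\mathcal{T}}(G - A) \subseteq \fcl_{\mathcal{T}}(G) \neq E$ because $\fcl_{\mathcal{T}}(G)$ is a fully closed $k$-separating set containing $G - A$. Lemma \ref{3.3} then delivers the $\mathcal{T}$-equivalence of $(R, G)$ and $(R \cup A, G - A)$.
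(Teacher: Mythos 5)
Your argument is correct, and it is the natural proof given the toolkit this paper imports from \cite{clark2011tangles}; note that the paper only cites this lemma, so there is no in-text proof to compare against. Two remarks. First, your opening computation is longer than it needs to be: once you observe that $(A)$ is a partial $k$-sequence for $R$, the identity $\fcl_{\mathcal{T}}(R\cup A)=\fcl_{\mathcal{T}}(R)$ is exactly Lemma \ref{fclcontaincor} with $m=1$, so the extension to a maximal sequence and the two-inclusion argument can be replaced by a citation. Second, you repeatedly lean on the fact that $\fcl_{\mathcal{T}}(R)$ is itself a fully closed $k$-separating set (to get $\fcl_{\mathcal{T}}(R\cup A)\subseteq \fcl_{\mathcal{T}}(R)$, and again to derive a contradiction when $E-\fcl_{\mathcal{T}}(R)$ is assumed $\mathcal{T}$-weak), but you never justify it. It is true, and follows in one line from Lemma \ref{maxsequence}: $\fcl_{\mathcal{T}}(R)=R\cup(\bigcup_i X_i)$ for a \emph{maximal} partial $k$-sequence, and if $\fcl_{\mathcal{T}}(R)\cup Y$ were $k$-separating for some non-empty $\mathcal{T}$-weak $Y$ disjoint from it, appending $Y$ would contradict maximality. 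You should say this, since otherwise the claim that $E-\fcl_{\mathcal{T}}(R)$ is $\mathcal{T}$-strong --- which you correctly identify as the delicate step, and which is what makes $G-A$ a $\mathcal{T}$-strong set so that Lemma \ref{3.3} applies --- rests on an unstated property of the closure operator. With that sentence added, the proof is complete.
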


\begin{lemma}
\cite[Corollary 3.5]{clark2011tangles}
 \label{fclcontaincor}
Let $\mathcal{T}$ be a tangle of order $k$ in a connectivity system $(E,\lambda)$, and let $X$ be a $\mathcal{T}$-strong $k$-separating set. If $(X_i)_{i=1}^m$ is a partial $k$-sequence for $X$, then $\fcl_{\mathcal{T}}(X\cup (\bigcup_{i=1}^m X_i))=\fcl_{\mathcal{T}}(X)$. 
\end{lemma}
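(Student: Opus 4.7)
The plan is to reduce the claim to Lemma \ref{maxsequence} by extending the given partial $k$-sequence for $X$ to a full $k$-sequence for $X$ through a $k$-sequence for $X' := X\cup (\bigcup_{i=1}^m X_i)$. First, I would observe that $X'$ is itself a $\mathcal{T}$-strong $k$-separating set: it is $k$-separating by taking $j=m$ in the definition of partial $k$-sequence, and since $X\subseteq X'$ and supersets of $\mathcal{T}$-strong sets are $\mathcal{T}$-strong, it is $\mathcal{T}$-strong. Hence $\fcl_{\mathcal{T}}(X')$ is defined.

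Next, let $(Y_j)_{j=1}^n$ be a $k$-sequence for $X'$ (which exists, since any partial $k$-sequence can be extended to a maximal one by the finiteness of $E$). I would then claim that the concatenation $(X_1,\ldots,X_m,Y_1,\ldots,Y_n)$ is a $k$-sequence for $X$. It is plainly a partial $k$-sequence for $X$: the sets are pairwise disjoint (the $Y_j$ lie in $E-X'$), non-empty, and $\mathcal{T}$-weak, and each prefix-union with $X$ is either of the form $X\cup(\bigcup_{i=1}^j X_i)$ (which is $k$-separating by hypothesis) or of the form $X'\cup(\bigcup_{j=1}^\ell Y_j)$ (which is $k$-separating because $(Y_j)$ is a partial $k$-sequence for $X'$). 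Maximality follows from the maximality of $(Y_j)$: any proper extension would yield a non-empty $\mathcal{T}$-weak set $Z\subseteq E - X' - (\bigcup_j Y_j)$ with $X'\cup(\bigcup_j Y_j)\cup Z$ still $k$-separating, contradicting that $(Y_j)_{j=1}^n$ is a $k$-sequence for $X'$.

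Finally, applying Lemma \ref{maxsequence} to $X$ with the concatenated $k$-sequence gives
\[
\fcl_{\mathcal{T}}(X) = X\cup\Bigl(\bigcup_{i=1}^m X_i\Bigr)\cup\Bigl(\bigcup_{j=1}^n Y_j\Bigr) = X'\cup\Bigl(\bigcup_{j=1}^n Y_j\Bigr),
\]
while applying it to $X'$ with the $k$-sequence $(Y_j)_{j=1}^n$ gives $\fcl_{\mathcal{T}}(X')=X'\cup(\bigcup_{j=1}^n Y_j)$, and the two right-hand sides are equal.

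The only real subtlety is the maximality step, and this is where the proof actually uses that $(Y_j)$ was taken to be a $k$-sequence (not merely a partial one); the rest is bookkeeping to match the hypotheses of Lemma \ref{maxsequence}. No crossing-separation or submodularity argument is needed here since the conclusion is a purely set-theoretic identity between two full closures computed from $k$-sequences.
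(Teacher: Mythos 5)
The paper offers no proof of this lemma; it is quoted as Corollary 3.5 of \cite{clark2011tangles}, so there is no in-paper argument to compare yours against. Your derivation from Lemma \ref{maxsequence} is correct: $X'=X\cup(\bigcup_{i=1}^m X_i)$ is indeed a $\mathcal{T}$-strong $k$-separating set, the concatenation of $(X_i)_{i=1}^m$ with a $k$-sequence $(Y_j)_{j=1}^n$ for $X'$ is a $k$-sequence for $X$, and two applications of Lemma \ref{maxsequence} give $\fcl_{\mathcal{T}}(X)=X'\cup(\bigcup_{j=1}^n Y_j)=\fcl_{\mathcal{T}}(X')$. Two remarks. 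First, your maximality step shows only that the concatenation cannot be extended by appending one further $\mathcal{T}$-weak set, whereas the paper defines a $k$-sequence by requiring the union to be \emph{inclusion-wise maximal} among all partial $k$-sequences for $X$. The two conditions do agree, but that deserves a sentence: a non-extendable partial $k$-sequence has, by definition, a fully closed union, so its union contains $\fcl_{\mathcal{T}}(X)$; and since every achievable union is contained in some inclusion-wise maximal one, each of which equals $\fcl_{\mathcal{T}}(X)$ by Lemma \ref{maxsequence}, the union also lies inside $\fcl_{\mathcal{T}}(X)$. Second, in the source the present statement is Corollary 3.5 while Lemma \ref{maxsequence} is Lemma 3.6, so the original development almost certainly runs in the opposite direction: one first shows that any fully closed $k$-separating set containing $X$ must absorb each $X_i$ in turn (a short submodularity argument), so that the families of fully closed sets containing $X$ and containing $X'$ coincide and the two full closures, being intersections of the same family, are equal; Lemma \ref{maxsequence} is then deduced from this. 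Your route is legitimate here, where both statements are imported as black boxes, but it would be circular if transplanted back into \cite{clark2011tangles}.
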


In order to handle separations that cross, we need the following notion. A $\mathcal{T}$-strong partition $(P_1,\ldots, P_n)$ of $E$ is a \textit{$k$-flower} in $\mathcal{T}$ with \textit{petals} $P_1,\ldots, P_n$ if, for all $i$, both $P_i$ and $P_i\cup P_{i+1}$ are $k$-separating sets, where all subscripts are interpreted modulo $n$. A $k$-separating set $X$ or $k$-separation $(X,E-X)$ is said to be \textit{displayed} by $\Phi$ if $X$ is a union of petals of $\Phi$.

Let $\pi$ be a partition of $E$ (note that we allow members of $\pi$ to be empty.) Let $T$ be a tree such that every member of $\pi$ labels a vertex of $T$ (some vertices may be unlabelled and no vertex is multiply labelled.) We say that $T$ is a \textit{$\pi$-labelled tree}. The vertices of $T$ labelled by the members of $\pi$ are called \textit{bag vertices}, and the members of $\pi$ are called \textit{bags}. Let $T'$ be a subtree of $T$. The union of those bags that label vertices of $T'$ is the subset of $E$ \textit{displayed} by $T'$. Let $e$ be an edge of $T$. The \textit{partition of $E$ displayed by $e$} is the partition displayed by the connected components of $T\del e$. Let $v$ be a vertex of $T$ that is not a bag vertex. Then the \textit{partition of $E$ displayed by $v$} is the partition displayed by the connected components of $T-v$. The edges incident with $v$ are in natural one-to-one correspondence with the connected components of $T-v$, and hence with the members of the partition of $E$ displayed by $v$. If a cyclic ordering is imposed on the edges incident with $v$, then we cyclically order the members of the partition of $E$ displayed by $v$ in the corresponding order. We say that $v$ is a \textit{$k$-flower vertex for $\mathcal{T}$} if the partition $(P_1,\ldots,P_n)$ of $E$ displayed by $v$, in the cyclic order corresponding to the cyclic order on the edges incident with $v$, is a $k$-flower in $\mathcal{T}$. The $k$-separations displayed by the $k$-flower corresponding to a $k$-flower vertex are called the $k$-separations \textit{displayed by $v$}. A $k$-separation is \textit{displayed} by $T$ if it is displayed by an edge or a $k$-flower vertex of $T$.

We say that $\mathcal{T}$ is a \textit{robust tangle of order $k$} in $K=(E,\lambda)$ if $\mathcal{T}$ is a tangle of order $k$ that satisfies:
\begin{enumerate}
 \item[(RT3)] If $A_1,A_2,\ldots,A_8\in \mathcal{T}$, then $A_1\cup A_2\cup \cdots \cup A_8\neq E$.
\end{enumerate}

The following result is the main theorem of \cite{clark2011tangles}.

\begin{thm}
\cite[Theorem 7.1]{clark2011tangles}
\label{treeofseparations}
Let $\mathcal{T}$ be a robust tangle of order $k$ in a connectivity system $K=(E,\lambda)$. There is a $\pi$-labelled tree such that every $k$-separation of $K$ that is non-sequential with respect to $\mathcal{T}$ is $\mathcal{T}$-equivalent to some $k$-separation displayed by $T$.
\end{thm}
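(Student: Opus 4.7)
Plan: The natural approach is to build the $\pi$-labelled tree $T$ by iteratively processing the $\mathcal{T}$-equivalence classes of non-sequential $k$-separations, choosing one representative from each class. The main technical challenge is that two such representatives may cross; two crossing separations cannot both sit on distinct edges of a tree, and the notion of a flower vertex is introduced precisely to display multiple mutually-crossing separations at a single unlabelled vertex. So any proof must, at each step, decide whether a new representative can be installed on a new edge or whether it must be absorbed into a (possibly enlarged) flower vertex.

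First I would prove a key uncrossing lemma: if $(A,B)$ and $(C,D)$ are non-$\mathcal{T}$-equivalent, non-sequential $k$-separations that cross, then they jointly lie in a common $k$-flower in $\mathcal{T}$. By submodularity applied to the pairs $\{A,C\}, \{A,D\}, \{B,C\}, \{B,D\}$, at most two of the four corners fail to be $k$-separating. If some $k$-separating corner is $\mathcal{T}$-weak, Lemma \ref{elemseq} lets me shift that corner across without changing the equivalence class, thereby removing the crossing; the robustness condition \Ra\ prevents all the corners from being $\mathcal{T}$-weak simultaneously, since no eight members of $\mathcal{T}$ may cover $E$. Hence the only obstruction to uncrossing is that the four corners are all $\mathcal{T}$-strong and $k$-separating, in which case they are the petals of a $4$-flower in $\mathcal{T}$, which can then be extended to a maximal flower displaying both original separations.

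Next I would construct $T$ by induction, either on $|E|$ or on the number of equivalence classes. The trivial one-vertex tree with bag $E$ handles the base case. Given a tree $T_0$ that already displays a representative from each of some classes, I incorporate the next class via a representative $(A,B)$. If $(A,B)$ does not cross any separation displayed by $T_0$, then $(A,B)$ is concentrated in a single bag $P_i$, and I refine $T_0$ by splitting $P_i$ along $(A,B)$ and inserting a new edge. If $(A,B)$ crosses some currently-displayed separation, the uncrossing lemma locates a common flower, and I refine $T_0$ by replacing the relevant edge (or enlarging an existing flower vertex) with a flower vertex whose cyclic petal ordering displays both separations. Lemmas \ref{maxsequence}, \ref{3.3}, and \ref{fclcontaincor} ensure that $\mathcal{T}$-equivalence classes are preserved and that no previously-displayed representative is lost under these refinements.

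The principal obstacle I expect is the maximal-flower step. When three or more non-equivalent representatives pairwise cross, I need to show that they jointly fit into a single $k$-flower with a well-defined cyclic order on its petals, not merely into several pairwise-compatible $4$-flowers. This is a global statement that does not follow from the pairwise uncrossing lemma alone, and it is the point at which the full eight-set strength of \Ra\ (rather than, say, \Ta\ with three sets) is presumably used: one has to rule out configurations where the petal orderings inherited from distinct pairwise flowers are cyclically inconsistent. A secondary difficulty is arguing that every non-sequential $k$-separation, not just those we explicitly installed, is $\mathcal{T}$-equivalent to one displayed by the final tree; I would address this by noting that a non-displayed representative would have to cross the tree nontrivially, triggering one more application of the uncrossing lemma, contradicting the termination of the construction.
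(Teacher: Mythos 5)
This paper does not prove Theorem \ref{treeofseparations}; it imports it verbatim as Theorem 7.1 of \cite{clark2011tangles} and uses it as a black box, so there is no internal proof to measure your attempt against. Judged on its own, your outline correctly identifies the strategy that the cited proof (and its ancestors for $3$- and $4$-separations) actually follows --- uncross crossing representatives where possible, and package the genuinely crossing ones into flowers displayed at unlabelled vertices --- but as written it is a plan with at least two genuine gaps, so it does not constitute a proof.

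The first gap is in the uncrossing dichotomy itself. Submodularity gives you at least one $k$-separating corner on each diagonal (one of $A\cap C, B\cap D$ and one of $A\cap D, B\cap C$), not the clean trichotomy you assert. If, say, $A\cap C$ is $k$-separating and $\mathcal{T}$-weak, shifting it out of $C$ via Lemma \ref{elemseq} requires $D\cup(A\cap C)$ to be $k$-separating, i.e.\ requires the corner $B\cap C$ from the \emph{other} diagonal to be $k$-separating; when exactly one corner of each diagonal is $k$-separating, no weak corner can be moved, yet the four corners do not form a flower. There are also configurations (e.g.\ $B\cap C$ and $B\cap D$ $k$-separating, both $\mathcal{T}$-strong, with $A\cap C$ and $A\cap D$ not $k$-separating) in which neither horn of your dichotomy applies and one must argue separately that $(C,D)$ is nonetheless equivalent to a separation displayed by a $3$-petal flower containing $A$. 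The second gap you flag yourself, but it is the heart of the matter: assembling three or more pairwise-crossing representatives into a single flower with a coherent cyclic ordering, and verifying that bag splits and flower enlargements never destroy separations already displayed, is where essentially all of the work in \cite{clark2011tangles} lives. It requires the machinery of tight and loose petals, flower equivalence, maximal flowers, and a notion of a separation \emph{conforming} with a $\pi$-labelled tree, none of which your sketch supplies or replaces. The concluding termination argument is also not needed in the form you give it: if the induction runs over equivalence classes, every class receives a representative by fiat, and the real issue is the preservation of previously displayed classes under refinement, which you assert rather than prove.
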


We can now state the main theorem of this paper.

\begin{thm}
\label{main}
Let $K=(E,\lambda)$ be a connectivity system, and let $\mathcal{T}_1, \ldots \mathcal{T}_n$ be tangles in $K$, none of which is a truncation of another. Then there is a $\pi$-labelled tree $T$ such that the following hold.
 \begin{enumerate}
  \item[(i)] For all $i,j\in \{1,\ldots, n\}$ with $i\neq j$, there is a minimum-order distinguishing separation for $\mathcal{T}_i$ and $\mathcal{T}_j$ that is displayed by some edge of $T$; and
    \item[(ii)] For all $i\in \{1,\ldots, n\}$, if $\mathcal{T}_i$ is a robust tangle of order $k$ and $(X,Y)$ is a $k$-separation of $K$ that is non-sequential with respect to $\mathcal{T}_i$, then there is some $k$-separation $(X',Y')$ of $K$ displayed by $T$ that is $\mathcal{T}_i$-equivalent to $(X,Y)$.
  \end{enumerate}
\end{thm}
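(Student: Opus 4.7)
My plan is to start from the Geelen–Gerards–Whittle tree of tangles $T_0$ given by Theorem \ref{treeoftangles}, and then locally refine the bag at each vertex $i$ for which $\mathcal{T}_i$ is robust by grafting in a Clark–Whittle style tree produced by Theorem \ref{treeofseparations}. The two pieces of machinery should combine almost orthogonally: $T_0$ provides the global structure required for (i), and the local refinements contribute the $\pi$-labelled structure required for (ii).

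Fix a robust tangle $\mathcal{T}_i$ of order $k_i$. Let $B_i$ be the bag at vertex $i$ of $T_0$ and, for each edge $e$ incident to $i$, let $A_e$ be the union of bags in the component of $T_0\del e$ not containing $i$; Theorem \ref{treeoftangles}(a) says $E\setminus A_e\notin \mathcal{T}_i$. The technical heart of the proof is a reduction, of the type apparently set up in Section \ref{s3}, to a connectivity system $K_i$ obtained from $K$ by collapsing each $A_e$ to a single external element $a_e$; the external elements act as placeholders for the pieces of $E$ that the other tangles will be responsible for in the final tree. Using Lemmas \ref{maxsequence}, \ref{3.3}, \ref{elemseq}, and \ref{fclcontaincor}, one verifies that $\mathcal{T}_i$ descends to a robust tangle $\mathcal{T}_i'$ of the same order $k_i$ in $K_i$ and that there is a bijection between non-sequential $k_i$-separations of $K$ relative to $\mathcal{T}_i$ and those of $K_i$ relative to $\mathcal{T}_i'$ which preserves $\mathcal{T}_i$-equivalence.

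I would then apply Theorem \ref{treeofseparations} to $(K_i,\mathcal{T}_i')$ to obtain a $\pi_i$-labelled tree $T_i$, and graft each edge $e$ of $T_0$ incident to $i$ onto $T_i$ at the bag vertex containing the external element $a_e$, after re-expanding $a_e$ back into the set $A_e$ on the far side of the grafted edge. Doing this at every robust $\mathcal{T}_i$ produces the desired tree $T$: property (i) survives because every edge separation of $T_0$ remains an edge separation of $T$, and property (ii) follows because any non-sequential $k_i$-separation of $K$ pushes forward to one of $K_i$, is $\mathcal{T}_i'$-equivalent to one displayed by $T_i$ via Theorem \ref{treeofseparations}, and pulls back to a $\mathcal{T}_i$-equivalent separation displayed by $T$. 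The principal obstacle I expect is the reduction to $K_i$: one must show that collapsing the externally-imposed sets $A_e$ neither creates spurious non-sequential $k_i$-separations nor identifies $\mathcal{T}_i$-inequivalent ones, and that cyclic orderings at $k_i$-flower vertices of $T_i$ extend correctly when multiple external elements lie in a single petal.
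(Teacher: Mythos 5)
Your proposal follows essentially the same route as the paper: build the Geelen--Gerards--Whittle tree, collapse the external sets at each robust tangle's vertex to single elements (the paper's $K\circ\pi$ construction from Section \ref{s3}), apply Theorem \ref{treeofseparations} to the collapsed system, and graft the resulting trees back together along the edges of the tree of tangles. The ``principal obstacle'' you flag --- uncrossing separations against the collapsed sets --- is exactly what the paper resolves via Lemma \ref{bag2}, which uses minimality of the distinguishing separations to show the collapsed sets form a bag partition, so Lemma \ref{sepeq} applies.
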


\section{New tangles from old}\label{s3}

For tangles $\mathcal{T}_1, \ldots \mathcal{T}_n$ in a connectivity system $K$, none of which is a truncation of another, crossing separations are the main obstacle to combining the separations displayed by the tree of Theorem \ref{treeoftangles} and the tree of Theorem \ref{treeofseparations} into a single tree. To overcome this, we will show that for each robust tangle $\mathcal{T}_i$ we can construct the tree of Theorem \ref{treeofseparations} for $\mathcal{T}_i$ in such a way that the distinguishing separations displayed by the tree of Theorem \ref{treeoftangles} conform. We do this by moving to a new connectivity system and tangle, which we describe in this section.

The following construction is found in \cite{geelen2009tangles}. Let $K=(E,\lambda)$ be a connectivity system and let $X\subseteq E$. Let $K\circ X=((E-X)\cup \{x\}, \lambda')$, where for each $A\subseteq E-X$ we let $\lambda'(A)=\lambda(A)$ and $\lambda'(A\cup \{x\})=\lambda(A\cup X)$.

\begin{lemma}
\label{circ}\cite[Lemma 4.2.]{geelen2009tangles}
If $K$ is a connectivity system and $X\subseteq E$, then $K\circ X$ is a connectivity system.
\end{lemma}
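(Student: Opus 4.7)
My plan is to verify directly from the definition of $\lambda'$ that it is integer-valued, symmetric, and submodular on the subsets of $E'=(E-X)\cup\{x\}$. Integer-valuedness is immediate since every $\lambda'$-value is by definition a $\lambda$-value. The organizing idea is the dictionary between subsets of $E'$ and subsets of $E$: a set $A'\subseteq E'$ either avoids $x$, in which case $A'\subseteq E-X$ and $\lambda'(A')=\lambda(A')$, or contains $x$, in which case $A'=A\cup\{x\}$ with $A\subseteq E-X$ and $\lambda'(A')=\lambda(A\cup X)$. Informally, computing $\lambda'$ amounts to computing $\lambda$ after replacing the token $x$ by the set $X$.

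For symmetry, I would split on whether $x\in A'$. If $x\notin A'$, the complement of $A'$ in $E'$ is $(E-X-A')\cup\{x\}$, whose $\lambda'$-value is $\lambda((E-X-A')\cup X)=\lambda(E-A')$; by symmetry of $\lambda$ this equals $\lambda(A')=\lambda'(A')$. The case $x\in A'$ is analogous.

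For submodularity, I would consider the four cases according to whether $x\in A'$ and whether $x\in B'$. In each case, I translate each of $\lambda'(A')$, $\lambda'(B')$, $\lambda'(A'\cup B')$, $\lambda'(A'\cap B')$ into a $\lambda$-value on $E$ via the dictionary above, and then appeal to submodularity of $\lambda$. When $x\notin A'\cup B'$, the inequality is literally submodularity of $\lambda$ on subsets of $E-X$. When $x\in A'\cap B'$ and $A'=A\cup\{x\}$, $B'=B\cup\{x\}$ with $A,B\subseteq E-X$, the inequality reduces to submodularity of $\lambda$ applied to $A\cup X$ and $B\cup X$, using the identities $(A\cup X)\cup(B\cup X)=(A\cup B)\cup X$ and $(A\cup X)\cap(B\cup X)=(A\cap B)\cup X$.

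The one step that I expect to require a moment of care is the mixed case, where $x$ lies in exactly one of $A',B'$. Writing $A'=A\cup\{x\}$ and $B'=B$ with $A,B\subseteq E-X$, the required inequality reads $\lambda(A\cup X)+\lambda(B)\geq \lambda((A\cup B)\cup X)+\lambda(A\cap B)$. This is submodularity of $\lambda$ applied to $A\cup X$ and $B$ provided that $(A\cup X)\cap B=A\cap B$, and this identity holds because $B\cap X=\emptyset$, since $B\subseteq E-X$. So no genuine obstacle arises: the whole proof is bookkeeping of unions and intersections under the $x\leftrightarrow X$ correspondence, with the disjointness $B\cap X=\emptyset$ in the mixed case being the only substantive point.
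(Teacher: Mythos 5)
Your verification is correct and complete: the case analysis for symmetry and submodularity under the $x\leftrightarrow X$ dictionary, including the disjointness observation $B\cap X=\emptyset$ in the mixed case, is exactly the routine check intended here. The paper itself gives no proof, deferring to \cite[Lemma 4.2]{geelen2009tangles}, where the argument is the same direct verification of the connectivity-function axioms.
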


Let $X$ be a subset of $E$, and let $\pi=(X_1, \ldots, X_n)$ be a partition of $X$. By repeated application of Lemma \ref{circ}, we deduce that $K\circ \pi:=((E-X)\cup \{x_1,\ldots,x_n\}, \lambda_{\pi})$ is a connectivity system, where for each $A\subseteq E-X$ and $I\subseteq \{1,2,\ldots, n\}$ we let  $\lambda_{\pi}(A\cup (\bigcup_{i\in I} x_i))=\lambda(A\cup (\bigcup_{i\in I} X_i))$.

We can also obtain a robust tangle in $K\circ X$ from a robust tangle in $K$ as follows. We omit the routine verification of the robust tangle axioms.

\begin{lemma}
\label{circtangle}
Let $\mathcal{T}$ be a robust tangle of order $k$ in a connectivity system $K$ and let $X$ be in $\mathcal{T}$. Let $\mathcal{T}'$ be the set of subsets of $(E-X)\cup \{x\}$ such that, for all $A\subseteq (E-X)$, we have $A\in \mathcal{T}'$ if and only if $A\in \mathcal{T}$; and $A\cup \{x\}\in \mathcal{T}'$ if and only if $A\cup X\in \mathcal{T}$. Then $\mathcal{T}'$ is a robust tangle of order $k$ in $K\circ X$. 
\end{lemma}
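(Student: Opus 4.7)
The plan is to verify each of the tangle axioms (T1)--(T4) and the robustness axiom (RT3) by translating through the natural correspondence between $E':=(E-X)\cup\{x\}$ and a collection of subsets of $E$. Define $\hat{\phantom{A}}\colon 2^{E'}\to 2^E$ by $\hat{A'}:=A'$ if $x\notin A'$ and $\hat{A'}:=(A'-\{x\})\cup X$ if $x\in A'$. By the definition of $\lambda'$ we have $\lambda'(A')=\lambda(\hat{A'})$, by the definition of $\mathcal{T}'$ we have $A'\in\mathcal{T}'$ if and only if $\hat{A'}\in\mathcal{T}$, and because $E$ is the disjoint union of $E-X$ and $X$, the bijection $\hat{\phantom{A}}$ takes the complement in $E'$ to the complement in $E$, i.e.\ $\widehat{E'-A'}=E-\hat{A'}$.

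Under this dictionary, (T1) is immediate from (T1) for $\mathcal{T}$. For (T2), a $(k-1)$-separation $(A',E'-A')$ in $K\circ X$ yields a $(k-1)$-separation $(\hat{A'},E-\hat{A'})$ of $K$, so one of $\hat{A'}$, $E-\hat{A'}$ lies in $\mathcal{T}$; pulling back along $\hat{\phantom{A}}$ gives the corresponding member of $\mathcal{T}'$. The crucial observation for (T3) and (RT3) is that if $A'_1\cup\cdots\cup A'_m=E'$, then $x\in A'_i$ for some $i$, so $\hat{A'_i}\supseteq X$ and therefore $\hat{A'_1}\cup\cdots\cup\hat{A'_m}\supseteq (E-X)\cup X=E$; so any counterexample to (T3) or (RT3) in $\mathcal{T}'$ produces a counterexample with the same number of sets in $\mathcal{T}$.

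For (T4), if $e\in E-X$ then $E'-\{e\}$ contains $x$ and $\widehat{E'-\{e\}}=E-\{e\}$, which is not in $\mathcal{T}$ by (T4) for $\mathcal{T}$; if $e=x$ then $E'-\{x\}=E-X$ and its image under $\hat{\phantom{A}}$ is $E-X$ itself, and we must check $E-X\notin\mathcal{T}$. This is the one point where the hypothesis $X\in\mathcal{T}$ enters: applying (T3) for $\mathcal{T}$ to the triple $X,X,E-X$ shows that $E-X$ cannot also lie in $\mathcal{T}$.

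No step presents a real obstacle; the entire proof amounts to bookkeeping via the bijection $\hat{\phantom{A}}$ together with the single use of $X\in\mathcal{T}$ in (T4) at $x$. It is precisely for this reason that the authors describe the verification as routine.
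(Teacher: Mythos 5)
Your verification is correct and is exactly the routine argument the paper omits: the bijection $A'\mapsto \hat{A'}$ preserves $\lambda$, membership in the tangle, and complementation, so (T1)--(T3) and (RT3) transfer directly, and you rightly isolate the one non-mechanical point, namely that (T4) at the element $x$ requires $E-X\notin\mathcal{T}$, which follows from $X\in\mathcal{T}$ via (T3). Since the paper explicitly declines to give this proof, your write-up is a faithful completion of it rather than an alternative route.
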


In particular, if $\mathcal{T}$ is a robust tangle of order $k$ in a connectivity system $K=(E,\lambda)$, and $\pi=(X_1,\ldots, X_n)$ is a partition of a subset $X$ of $E$ such that $X_i\in \mathcal{T}$ for all $i\in \{1,2,\ldots,n\}$, then, by repeated application of Lemma \ref{circtangle}, we obtain a robust tangle $\mathcal{T}_{\pi}$ of order $k$ in $K\circ \pi$. For any $A\subseteq E-X$ and $I\subseteq \{1,2,\ldots, n\}$, we have $A\cup (\bigcup_{i\in I} x_i)\in \mathcal{T}_{\pi}$ if and only if $A\cup (\bigcup_{i\in I} X_i)\in \mathcal{T}$.

Let $K=(E,\lambda)$ be a connectivity system, let $\mathcal{T}$ be a robust tangle of order $k$ in $K$, and let $B$ be a subset of $E$. A partition $\pi=(B_1,\ldots, B_n)$ of $B$ is called a \textit{bag partition of $B$ with respect to $\mathcal{T}$} if 
\begin{enumerate}
 \item[(i)] $B_1,\ldots, B_n\in \mathcal{T}$; and
 \item[(ii)] For each $i\in \{1,2,\ldots,n\}$, if $(X,Y)$ is a partition of $B_i$, then $\lambda(X)\geq \lambda(B_i)$ or $\lambda(Y)\geq \lambda(B_i)$.
\end{enumerate}
The sets $B_1,\ldots, B_n$ are called \textit{bags} of $\pi$. By Lemma \ref{circtangle} there is a robust tangle $\mathcal{T}_{\pi}$ of order $k$ in $K\circ \pi$ obtained from $\mathcal{T}$; we call $\mathcal{T}_{\pi}$ the tangle in $K\circ \pi$ \textit{induced} by $\mathcal{T}$. For the remainder of this section we look at $k$-separations in $K$ and $K\circ \pi$, and we show that $\mathcal{T}$- and $\mathcal{T}_{\pi}$-equivalence are compatible. 

Let $B\subseteq E$, and let $(B_1,\ldots, B_n)$ be a bag partition for $B$ with respect to $\mathcal{T}$, and suppose that $(R,G)$ is a $k$-separation that does not cross $B_i$ for all $i\in \{1,2,\ldots,n\}$. Then $(R,G)=(R'\cup (\bigcup_{i\in I} B_i),G'\cup (\bigcup_{i\in (\{1,2,\ldots,n\}-I)} B_i))$ for some partition $(R',G')$ of $E-B$ and some $I\subseteq \{1,2,\ldots,n\}$. It follows immediately from the definition of $K\circ \pi$ that the corresponding partition $(R'\cup (\bigcup_{i\in I} b_i),G'\cup (\bigcup_{i\in (\{1,2,\ldots,n\}-I)} b_i))$ of $(E-B)\cup \{b_1,\ldots, b_n\}$ is a $k$-separation of $K\circ \pi$. We say that the $k$-separation $(R'\cup (\bigcup_{i\in I} b_i),G'\cup (\bigcup_{i\in (\{1,2,\ldots,n\}-I)} b_i))$ of $K\circ \pi$ is \textit{induced} by the $k$-separation $(R,G)$ of $K$. Moreover, it follows immediately from the definition of the tangle $\mathcal{T}_{\pi}$ induced by $\mathcal{T}$ that $(R,G)$ is $\mathcal{T}$-strong if and only if the induced $k$-separation $(R'\cup (\bigcup_{i\in I} b_i),G'\cup (\bigcup_{i\in (\{1,2,\ldots,n\}-I)} b_i))$ is $\mathcal{T}_{\pi}$-strong. We next show that every $\mathcal{T}$-equivalence class of non-sequential $k$-separations contains a member that does not cross any bags of $\pi$.

\begin{lemma}
\label{sepeq}
Let $\mathcal{T}$ be a tangle of order $k$ in a connectivity system $K=(E,\lambda)$, and let $\pi=(B_1,\ldots, B_n)$ be a bag partition for $B$ with respect to $\mathcal{T}$. If $(R,G)$ is a $k$-separation of $K$ that is non-sequential with respect to $\mathcal{T}$, then $(R,G)$ is $\mathcal{T}$-equivalent to a $k$-separation $(R',G')$ that does not cross $B_i$ for any $i\in \{1,2,\ldots,n\}$.
\end{lemma}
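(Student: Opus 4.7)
The plan is to induct on the number of bags of $\pi$ that $(R,G)$ crosses, resolving one crossing at each step while preserving $\mathcal{T}$-equivalence and not disturbing bags that are already uncrossed. If $(R,G)$ crosses no $B_i$, there is nothing to prove, so suppose $(R,G)$ crosses some $B_i$. Then $(R \cap B_i,\, G \cap B_i)$ is a partition of $B_i$ into two non-empty sets, and condition (ii) of the bag partition forces $\lambda(R \cap B_i) \geq \lambda(B_i)$ or $\lambda(G \cap B_i) \geq \lambda(B_i)$. By symmetry in $R$ and $G$, I will assume the latter, and plan to replace $(R,G)$ by $(R - B_i,\, G \cup (R \cap B_i))$.

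To see that this replacement is a $k$-separation, apply submodularity to $R$ and $E - B_i$ to obtain
\[
\lambda(R) + \lambda(B_i) \;\geq\; \lambda(R - B_i) + \lambda(G \cap B_i),
\]
which, together with $\lambda(G \cap B_i) \geq \lambda(B_i)$, gives $\lambda(R - B_i) \leq \lambda(R) \leq k$. Setting $A := R \cap B_i$, the set $A$ is non-empty by the crossing hypothesis and is $\mathcal{T}$-weak since $A \subseteq B_i \in \mathcal{T}$, and $G \cup A$ is $k$-separating by the computation just made. Lemma \ref{elemseq}, applied with the roles of $R$ and $G$ interchanged, then yields that $(R,G)$ is $\mathcal{T}$-equivalent to $(R - B_i,\, G \cup A)$.

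To close the induction, I note two things. First, the new separation does not cross $B_i$, since $B_i \subseteq G \cup A$. Second, because the bags $B_1, \ldots, B_n$ are pairwise disjoint, moving $A \subseteq B_i$ from $R$ to $G$ leaves the intersection of the separation with every $B_j$ ($j \neq i$) unchanged; in particular, no previously uncrossed bag becomes crossed. Thus the number of bags that the separation crosses strictly decreases, and iterating produces the desired $(R',G')$. The only subtlety I anticipate is that Lemma \ref{elemseq} demands that its input separation be non-sequential with respect to $\mathcal{T}$; but this property is preserved along the chain of replacements, since $\mathcal{T}$-equivalent separations share the same unordered pair $\{\fcl_{\mathcal{T}}(\cdot), \fcl_{\mathcal{T}}(\cdot)\}$ of full closures, and non-sequentiality is a property of that pair. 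The main conceptual ingredient is really condition (ii) of the bag partition, which is precisely what guarantees that on one side of the crossing the submodular slack is large enough to absorb $\lambda(B_i)$.
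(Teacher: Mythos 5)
Your proof is correct and follows essentially the same route as the paper: condition (ii) of the bag partition plus submodularity shows one side can absorb the crossing bag, Lemma \ref{elemseq} preserves $\mathcal{T}$-equivalence, and the crossing count strictly drops. The paper phrases this as a minimal-counterexample argument rather than an explicit induction, but the content is identical; your explicit remarks that disjointness of the bags prevents new crossings and that non-sequentiality is preserved under $\mathcal{T}$-equivalence are points the paper leaves implicit.
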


\begin{proof}
Assume that $(R,G)$ crosses the minimum number of bags amongst all $\mathcal{T}$-equivalent $k$-separations. We may assume, up to relabelling the bags, that $(R,G)$ crosses $B_1$. By definition of the bag partition $\pi$ we may assume that $\lambda(R\cap B_1)\geq \lambda(B_1)$. Then $R\cup B_1$ is $k$-separating by submodularity, so $(R\cup B_1,G-B_1)$ is a $k$-separation that is $\mathcal{T}$-equivalent to $(R,G)$ by Lemma \ref{elemseq}, and $(R\cup B_1,G-B_1)$ crosses fewer bags than $(R,G)$, a contradiction.
\end{proof}

For $k$-separations that are non-sequential with respect to $\mathcal{T}$, the induced $k$-separations are also non-sequential with respect to the induced tangle $\mathcal{T}_{\pi}$, which we now show.

\begin{lemma}
\label{circtangleeq}
Let $\mathcal{T}$ be a tangle of order $k$ in a connectivity system $K=(E,\lambda)$, and let $\pi=(B_1,\ldots, B_n)$ be a bag partition for $B$ with respect to $\mathcal{T}$. If $(R,G)$ is a $k$-separation of $K$ that is non-sequential with respect to $\mathcal{T}$ and does not cross $B_i$ for any $i\in \{1,2,\ldots,n\}$, then the induced $k$-separation $(R_{\pi}, G_{\pi})$ of $K\circ \pi$ is non-sequential with respect to $\mathcal{T}_{\pi}$. 
\end{lemma}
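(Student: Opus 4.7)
The plan is to prove the contrapositive: if the induced separation $(R_\pi, G_\pi)$ is $\mathcal{T}_\pi$-sequential, then $(R,G)$ must have been $\mathcal{T}$-sequential in the first place. By the symmetric roles of $R_\pi$ and $G_\pi$, I may assume that $\fcl_{\mathcal{T}_\pi}(R_\pi)$ equals the entire ground set $(E-B)\cup \{b_1,\ldots,b_n\}$ of $K\circ \pi$. Applying Lemma \ref{maxsequence} inside $K\circ \pi$ yields a $k$-sequence $(Y_j)_{j=1}^m$ for $R_\pi$ with $R_\pi \cup \bigcup_{j=1}^m Y_j = (E-B)\cup \{b_1,\ldots,b_n\}$, and the strategy is to lift this to a $k$-sequence for $R$ inside $K$.

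The lift is the natural one. Since $(R,G)$ crosses no bag, I can write $R=R_0\cup \bigcup_{i\in I}B_i$ and hence $R_\pi=R_0\cup \{b_i:i\in I\}$ for some $R_0\subseteq E-B$ and $I\subseteq\{1,\ldots,n\}$. Similarly, for each $j$ write $Y_j=Y_j^0\cup\{b_i:i\in J_j\}$ with $Y_j^0\subseteq E-B$ and $J_j\subseteq \{1,\ldots,n\}-I$, and set
\[
Y_j':=Y_j^0\cup \bigcup_{i\in J_j}B_i.
\]
Then the $Y_j'$ are pairwise disjoint, non-empty subsets of $E-R$. They are $\mathcal{T}$-weak: if $Y_j\subseteq A$ for some $A=A_0\cup\{b_i:i\in I_A\}\in\mathcal{T}_\pi$, then by the definition of the induced tangle (Lemma \ref{circtangle}) the set $A_0\cup \bigcup_{i\in I_A}B_i$ lies in $\mathcal{T}$ and contains $Y_j'$. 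The $k$-separating condition transfers because $R\cup \bigcup_{j=1}^\ell Y_j'$ is exactly the preimage of $R_\pi\cup \bigcup_{j=1}^\ell Y_j$ under the $b_i\mapsto B_i$ substitution, so by the defining relation of $K\circ \pi$,
\[
\lambda\!\left(R\cup \bigcup_{j=1}^\ell Y_j'\right)=\lambda_\pi\!\left(R_\pi\cup \bigcup_{j=1}^\ell Y_j\right)\leq k.
\]
Hence $(Y_j')_{j=1}^m$ is a partial $k$-sequence for $R$ in $K$.

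Finally, $R\cup \bigcup_{j=1}^m Y_j'=E$, so the partial $k$-sequence is maximal and Lemma \ref{maxsequence} yields $\fcl_\mathcal{T}(R)=E$. Therefore $G$ is $\mathcal{T}$-sequential, which contradicts the hypothesis that $(R,G)$ is non-sequential with respect to $\mathcal{T}$. The only subtle step is the verification that $\mathcal{T}_\pi$-weakness lifts to $\mathcal{T}$-weakness; this is exactly where the explicit description of the induced tangle is needed, and apart from this the rest of the argument is a direct translation between $K\circ \pi$ and $K$ using the non-crossing hypothesis.
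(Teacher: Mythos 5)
Your proof is correct and follows essentially the same route as the paper: assume the induced separation is $\mathcal{T}_{\pi}$-sequential, extract a $k$-sequence via Lemma \ref{maxsequence}, and lift it back to $K$ by the substitution $b_i\mapsto B_i$ to contradict non-sequentiality of $(R,G)$. Your version is in fact slightly more careful than the paper's, since you explicitly verify that $\mathcal{T}_{\pi}$-weakness of the terms lifts to $\mathcal{T}$-weakness, a step the paper leaves implicit.
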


\begin{proof}
Since $(R,G)$ does not cross any of the $B_i$, we know that $(R,G)=(R'\cup (\bigcup_{i\in I} B_i), G'\cup (\bigcup_{i\in (\{1,2,\ldots,n\}-I)} B_i))$ for some partition $(R', G')$ of $E-B$ and some subset $I$ of $\{1,2,\ldots,n\}$. Hence the induced $k$-separation of $K\circ \pi$ is $(R_{\pi},G_{\pi})=(R'\cup (\bigcup_{i\in I} b_i), G'\cup (\bigcup_{i\in (\{1,2,\ldots,n\}-I)} b_i))$. Assume that $(R_{\pi},G_{\pi})$ is $\mathcal{T}_{\pi}$-sequential. Then by Lemma \ref{maxsequence}, up to switching $R_{\pi}$ and $G_{\pi}$, there is a $k$-sequence $(X_i)_{i=1}^m$ for $R_{\pi}$ such that $R_{\pi}\cup (\bigcup_{i=1}^m X_i)=(E-B)\cup \{b_1,\ldots, b_n\}$. Now, for each $i\in \{1,2,\ldots,m\}$, we have $X_i=X_i'\cup (\bigcup_{i\in J} b_i)$ for some $X_i'\subseteq G'$ and $J\subseteq (\{1,2,\ldots,n\}-I)$. Then, by definition of $K\circ \pi$ and $\mathcal{T}_{\pi}$, it follows that there is a $k$-sequence $(Y_i)_{i=1}^m$ for $R$ where $Y_i=X_i'\cup (\bigcup_{i\in J} B_i)$ for each $i\in \{1,2,\ldots,m\}$, and $R\cup (\bigcup_{i=1}^m Y_i)=E$, so $(R,G)$ is $\mathcal{T}$-sequential by Lemma \ref{maxsequence}, a contradiction.
\end{proof}

To conclude this section, we show that $\mathcal{T}$-equivalence can be recovered from $\mathcal{T}_{\pi}$-equivalence.

\begin{lemma}
\label{eqcorrespondence}
Let $\mathcal{T}$ be a tangle of order $k$ in a connectivity system $K=(E,\lambda)$, and let $\pi=(B_1,\ldots, B_n)$ be a bag partition for $B$ with respect to $\mathcal{T}$. Let $(R,G)$ be a $k$-separation of $K$ that is non-sequential with respect to $\mathcal{T}$ and does not cross $B_i$ for all $i\in \{1,2,\ldots,n\}$, and let $(R_{\pi},G_{\pi})$ be the induced $k$-separation of $K\circ \pi$. If $(R_{\pi},G_{\pi})$ is $\mathcal{T}_{\pi}$-equivalent to $(R_{\pi}',G_{\pi}')$, then $(R,G)$ is $\mathcal{T}$-equivalent to the $k$-separation $(R',G')$ that induces $(R_{\pi}',G_{\pi}')$.
\end{lemma}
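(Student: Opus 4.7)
My plan is to reduce the conclusion to an equality of full closures via Lemma~\ref{3.3}, and then to establish that equality by lifting $k$-sequences from $K\circ\pi$ to $K$ and chaining elementary moves through Lemma~\ref{elemseq}.

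First, by Lemma~\ref{circtangleeq}, $(R_\pi,G_\pi)$ is non-sequential with respect to $\mathcal{T}_\pi$; since $\mathcal{T}_\pi$-equivalence matches full closures, $(R'_\pi,G'_\pi)$ is non-sequential with respect to $\mathcal{T}_\pi$ as well. Applying Lemma~\ref{3.3} inside $K\circ\pi$, after possibly interchanging $R'_\pi$ and $G'_\pi$, I may assume $\fcl_{\mathcal{T}_\pi}(R_\pi)=\fcl_{\mathcal{T}_\pi}(R'_\pi)$; call this common set $F_\pi$ and let $F\subseteq E$ denote its lift, namely the subset of $E$ obtained by replacing each $b_j\in F_\pi$ by the corresponding bag $B_j$.

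Next, by Lemma~\ref{maxsequence} I pick a $k$-sequence $(X_i)_{i=1}^m$ for $R_\pi$ in $K\circ\pi$ with $R_\pi\cup\bigcup_{i=1}^m X_i=F_\pi$, and an analogous $(X'_i)_{i=1}^{m'}$ for $R'_\pi$. Let $Y_i$, respectively $Y'_i$, be the lift of $X_i$, respectively $X'_i$, to $E$. The key claim is that $(Y_i)_{i=1}^m$ is a partial $k$-sequence for $R$ in $K$: pairwise disjointness, non-emptiness and containment in $G$ are inherited from $(X_i)$ because the lift preserves disjointness and complements; each partial union is $k$-separating because $\lambda(R\cup\bigcup_{i\le j}Y_i)=\lambda_\pi(R_\pi\cup\bigcup_{i\le j}X_i)\le k$ by the definition of $K\circ\pi$; and $\mathcal{T}$-weakness of each $Y_i$ is exactly where the construction of $\mathcal{T}_\pi$ does the work, since $X_i\subseteq C\in\mathcal{T}_\pi$ implies that the lift of $C$ is in $\mathcal{T}$ and contains $Y_i$. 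The same argument shows $(Y'_i)$ is a partial $k$-sequence for $R'$.

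I then iterate Lemma~\ref{elemseq} along $(Y_i)$; at each step the current separation is $\mathcal{T}$-equivalent to the non-sequential $(R,G)$, hence itself non-sequential, so Lemma~\ref{elemseq} applies to the next step. This yields that $(R,G)$ is $\mathcal{T}$-equivalent to $(R\cup\bigcup_i Y_i,\,G-\bigcup_i Y_i)=(F,E-F)$. The same reasoning applied to $(Y'_i)$ gives that $(R',G')$ is $\mathcal{T}$-equivalent to $(F,E-F)$, and transitivity of $\mathcal{T}$-equivalence closes the argument. In the symmetric case $\fcl_{\mathcal{T}_\pi}(R_\pi)=\fcl_{\mathcal{T}_\pi}(G'_\pi)$ I instead lift a $k$-sequence for $G'_\pi$, obtaining that $(G',R')$ is $\mathcal{T}$-equivalent to $(F,E-F)$, which is the same separation as $(R',G')$ since $\mathcal{T}$-equivalence is insensitive to the order of the sides.

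The main point to watch is the $\mathcal{T}$-weakness of the lifts $Y_i$, and, implicitly, the fact that the intermediate separations produced by iterating Lemma~\ref{elemseq} remain $\mathcal{T}$-strong so that $\mathcal{T}$-equivalence is defined at each step. Both rest on the construction of $\mathcal{T}_\pi$ and on the bag-respecting correspondence between $k$-separations in $K$ and those in $K\circ\pi$ that was set up earlier in this section, rather than on any substantially new argument.
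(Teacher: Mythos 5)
Your proof is correct and follows essentially the same route as the paper: reduce via Lemma~\ref{3.3} to an equality of full closures in $K\circ\pi$, then lift the two $k$-sequences to partial $k$-sequences in $K$ that terminate at the same set $F$. The only cosmetic difference is the finishing step, where the paper applies Lemma~\ref{fclcontaincor} to conclude $\fcl_{\mathcal{T}}(R)=\fcl_{\mathcal{T}}(R')$ and then invokes Lemma~\ref{3.3} again, whereas you iterate Lemma~\ref{elemseq} along each lifted sequence and use transitivity of $\mathcal{T}$-equivalence.
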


\begin{proof}
We may assume, up to switching $R_{\pi}'$ and $G_{\pi}'$, that $\fcl_{\mathcal{T}_{\pi}}(R_{\pi})= \fcl_{\mathcal{T}_{\pi}}(R_{\pi}')$. Let $(Y_i)_{i=1}^m$ be a $k$-sequence for $R_{\pi}$ and $(Z_i)_{i=1}^{p}$ be a $k$-sequence for $R_{\pi}'$. Then, for each $i\in \{1,2,\ldots,m\}$ and $j\in \{1,2,\ldots,p\}$, we have $Y_i=Y_i'\cup (\bigcup_{i\in I} b_i)$ and $Z_j=Z_j'\cup (\bigcup_{j\in J} b_j)$ for some subsets $Y_i'$ and $Z_j'$ of $E-B$ and some $I, J\subseteq \{1,2,\ldots,n\}$. For each $i\in \{1,2,\ldots,m\}$ and $j\in \{1,2,\ldots,p\}$, let $Y_i''=Y_i'\cup (\bigcup_{i\in I} B_i)$ and let $Z_j''=Z_j'\cup (\bigcup_{j\in J} B_j)$. Then $(Y_i'')_{i=1}^m$ is a partial $k$-sequence for $R$, $(Z_i'')_{i=1}^{p}$ is a partial $k$-sequence for $R'$, and $R\cup (\bigcup_{i\in \{1,2,\ldots,m\}} Y_i'')$ is equal to $R'\cup (\bigcup_{i\in \{1,2,\ldots,p\}} Z_i'')$ since $\fcl_{\mathcal{T}_{\pi}}(R_{\pi})= \fcl_{\mathcal{T}_{\pi}}(R_{\pi}')$, so it follows from Lemma \ref{fclcontaincor} that $\fcl_{\mathcal{T}}(R)= \fcl_{\mathcal{T}}(R')$. Hence $(R,G)$ is $\mathcal{T}$-equivalent to the $k$-separation $(R',G')$ by Lemma \ref{3.3}.
\end{proof}

\section{The main theorem}\label{s4}

The next lemma is used to show that minimum-order distinguishing separations give rise to bag partitions.

\begin{lemma}
\label{bag2}
Let $\mathcal{T}_1$ and $\mathcal{T}_2$ be tangles in $K$, and let $(X_1,X_2)$ be a minimum-order distinguishing separation for $\mathcal{T}_1$ and $\mathcal{T}_2$ with $X_1\in \mathcal{T}_1$ and $X_2\in \mathcal{T}_2$. If $(R,G)$ is a partition of $X_2$, then $\lambda(R)\geq \lambda(X_2)$ or $\lambda(R)\geq \lambda(X_2)$. 
\end{lemma}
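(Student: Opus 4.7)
The plan is to argue by contradiction. Set $\ell=\lambda(X_2)$ (which equals $\lambda(X_1)$ by symmetry of $\lambda$), and for each $i\in\{1,2\}$ let $k_i$ be the order of $\mathcal{T}_i$; since $X_i\in \mathcal{T}_i$, axiom (T1) gives $\ell<k_1$ and $\ell<k_2$. Suppose toward a contradiction that $\lambda(R)<\ell$ and $\lambda(G)<\ell$. The boundary case $R=\emptyset$ (or $G=\emptyset$) is immediate since the other part equals $X_2$, so I may assume $R$ and $G$ are both non-empty.

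The first step would be to pin down which side of each of the separations $(R,E-R)$ and $(G,E-G)$ lies in $\mathcal{T}_2$. Since $\lambda(R)<\ell<k_2$, axiom (T2) applied to $\mathcal{T}_2$ yields $R\in\mathcal{T}_2$ or $E-R\in\mathcal{T}_2$. The second option is excluded: because $R\subseteq X_2$ we have $E-R\supseteq X_1$, hence $(E-R)\cup X_2=E$, so using $E-R$ together with $X_2$ twice contradicts (T3) for $\mathcal{T}_2$. Thus $R\in\mathcal{T}_2$, and the identical argument gives $G\in\mathcal{T}_2$.

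Next I would turn to $\mathcal{T}_1$. Again (T2), applied at orders $\lambda(R),\lambda(G)<k_1$, forces that for each of $R,G$ either the set itself or its complement lies in $\mathcal{T}_1$. If $E-R\in\mathcal{T}_1$, then together with $R\in\mathcal{T}_2$ the pair $(E-R,R)$ is a distinguishing separation for $\mathcal{T}_1$ and $\mathcal{T}_2$ of order $\lambda(R)<\ell$, contradicting the minimum-order hypothesis on $(X_1,X_2)$; the same argument disposes of the case $E-G\in\mathcal{T}_1$. Therefore $R\in\mathcal{T}_1$ and $G\in\mathcal{T}_1$. But then $R,G,X_1\in\mathcal{T}_1$ and $R\cup G\cup X_1=X_2\cup X_1=E$, violating (T3) for $\mathcal{T}_1$.

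The only subtlety I expect is making sure every invocation of (T3) is legitimate — the axiom permits repeated elements among $A,B,C$, which is needed when covering $E$ with $E-R$ and $X_2$ — and verifying that the two escape routes on the $\mathcal{T}_1$ side (a strictly smaller distinguishing separation, versus a direct (T3) violation with $X_1$) together exhaust all possibilities left open by (T2). Once these are in place the contradiction is immediate, and the lemma follows.
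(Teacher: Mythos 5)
Your proof is correct and uses essentially the same ingredients as the paper's: (T2) to force each tangle to choose a side of $(R,E-R)$ and $(G,E-G)$, (T3) applied to $\{E-R,X_2,X_2\}$ to rule out complements, minimality of $(X_1,X_2)$ to make $\mathcal{T}_1$ and $\mathcal{T}_2$ agree, and a final (T3) violation via $R\cup G\cup X_1=E$. The only difference is the order of the case analysis, and your handling of the empty-part case and of repeated elements in (T3) is sound.
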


\begin{proof}
Assume that $\lambda(R)<\lambda(X_2)$ and $\lambda(G)<\lambda(X_2)$. Then, since $(X_1, X_2)$ is a minimum-order distinguishing separation for $\mathcal{T}_1$ and $\mathcal{T}_2$, neither $(R, E-R)$ nor $(G, E-G)$ is a distinguishing separation for $\mathcal{T}_1$ and $\mathcal{T}_2$. Now $G$ and $R$ cannot belong to $\mathcal{T}_1$ and $\mathcal{T}_2$ because $R\cup G\cup X_1=E$, contradicting (T3) for $\mathcal{T}_1$. Thus, up to relabelling $R$ and $G$, we may assume that $E-R$ belongs to $\mathcal{T}_1$ and $\mathcal{T}_2$. But $(E-R)\cup X_2=E$, contradicting (T3) for $\mathcal{T}_2$. 
\end{proof}

We now prove the main theorem, which we restate for convenience.

\begin{thm}
\label{main2}
Let $K=(E,\lambda)$ be a connectivity system, and let $\mathcal{T}_1, \ldots \mathcal{T}_n$ be tangles in $K$, none of which is a truncation of another. Then there is a partition $\pi$ of $E$, and a $\pi$-labelled tree $T$ such that the following hold.
 \begin{enumerate}
  \item[(i)] For all $i,j\in \{1,\ldots, n\}$ with $i\neq j$, there is a minimum-order distinguishing separation for $\mathcal{T}_i$ and $\mathcal{T}_j$ that is displayed by some edge of $T$; and
    \item[(ii)] For all $i\in \{1,\ldots, n\}$, if $\mathcal{T}_i$ is a robust tangle of order $k$ and $(X,Y)$ is a $k$-separation of $K$ that is non-sequential with respect to $\mathcal{T}_i$, then there is some $k$-separation $(X',Y')$ of $K$ displayed by $T$ that is $\mathcal{T}_i$-equivalent to $(X,Y)$.
  \end{enumerate}
\end{thm}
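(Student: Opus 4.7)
The plan is to combine Theorem \ref{treeoftangles} with Theorem \ref{treeofseparations} via the contraction $K\circ\pi$ developed in Section \ref{s3}. In outline: produce the tree-of-tangles $T_0$ for $\mathcal{T}_1,\ldots,\mathcal{T}_n$; for each robust $\mathcal{T}_i$, use the subtrees hanging off vertex $i$ in $T_0$ as a bag partition $\pi_i$, pass to the smaller system $K\circ\pi_i$ with its induced robust tangle, apply Theorem \ref{treeofseparations} there to obtain a local tree $T_i$, and finally graft each $T_i$ back into $T_0$ in place of vertex $i$.

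First, invoke Theorem \ref{treeoftangles} to obtain $T_0$ with $V(T_0)=\{1,\ldots,n\}$, bags $P_1,\ldots,P_n$, and properties (a), (b). Since any two adjacent vertices of $T_0$ are separated only by the single edge joining them, (b) forces the separation $(A_e,B_e)$ displayed by any edge $e=ij$ to be a minimum-order distinguishing separation for $\mathcal{T}_i$ and $\mathcal{T}_j$. Fix a robust $\mathcal{T}_i$ of order $k$; for each edge $e$ of $T_0$ incident with $i$, let $B_e$ be the set displayed by the component of $T_0\setminus e$ avoiding $i$. Property (a) together with (T2) yields $B_e\in\mathcal{T}_i$, and Lemma \ref{bag2} applied to $(A_e,B_e)$ shows that every partition of $B_e$ fails the strict $\lambda$-inequality on at least one part; hence the family of such $B_e$ forms a bag partition $\pi_i$ of $E-P_i$ with respect to $\mathcal{T}_i$. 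Let $\mathcal{T}_{i,\pi_i}$ be the induced robust tangle in $K\circ\pi_i$ (Lemma \ref{circtangle}), and let $T_i$ be the labelled tree given by Theorem \ref{treeofseparations} applied to $(K\circ\pi_i,\mathcal{T}_{i,\pi_i})$. Assemble the combined tree $T$ by replacing, simultaneously for every robust $i$, the vertex $i$ of $T_0$ by the tree $T_i$: each edge $e$ of $T_0$ formerly incident with $i$ is reattached to the unique bag vertex $v_e\in V(T_i)$ whose bag contains the summary element $b_e$, and then $b_e$ is removed from that bag.

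Verification of (i): the edge of $T$ inherited from any edge $e$ of $T_0$ still displays $(A_e,B_e)$, since on each side of $T\setminus e$ the union of bags reassembles into $A_e$ (respectively $B_e$) once every remaining summary element $b_{e'}$ is expanded back to $B_{e'}$. Verification of (ii): given a non-sequential $k$-separation $(X,Y)$ with respect to a robust $\mathcal{T}_i$, Lemma \ref{sepeq} supplies a $\mathcal{T}_i$-equivalent representative that crosses no bag of $\pi_i$; Lemma \ref{circtangleeq} promotes it to a non-sequential separation of $K\circ\pi_i$ with respect to $\mathcal{T}_{i,\pi_i}$; Theorem \ref{treeofseparations} inside $K\circ\pi_i$ yields a $\mathcal{T}_{i,\pi_i}$-equivalent separation displayed by $T_i$; Lemma \ref{eqcorrespondence} pulls this equivalence back to $\mathcal{T}_i$-equivalence in $K$; and the separation displayed by $T_i$ becomes, after un-contracting every $b_{e'}$, one displayed by $T$ itself. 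The main technical obstacle I expect is the bookkeeping around the grafting: one must verify that the displayed status of every edge-separation and every $k$-flower-vertex-separation inside each $T_i$ survives the attachment of the new pendant subtrees. This is why the surgery is confined to bag vertices (rather than $k$-flower vertices) of $T_i$, so that the cyclic edge-order at every $k$-flower vertex is left undisturbed and the flower structure is preserved intact.
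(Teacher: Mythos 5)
Your proposal is correct and follows essentially the same route as the paper: extract the bag partition at each vertex of the tree of tangles via Lemma \ref{bag2}, pass to $K\circ\pi_i$ and its induced robust tangle, apply Theorem \ref{treeofseparations} locally, and graft the local trees back together, verifying (i) and (ii) with Lemmas \ref{sepeq}, \ref{circtangleeq}, and \ref{eqcorrespondence} exactly as the paper does.
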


\begin{proof}
By Theorem \ref{treeoftangles} there is a tree $T$ on the vertex set $\{1,2,\ldots, n\}$ with the following properties.

\begin{enumerate}
 \item[(a)] For each $i\in V(T)$ and $e\in E(T)$ if $T'$ is the component of $T\del e$ containing $i$, then the union of those bags that label vertices of $T'$ is not a member of $\mathcal{T}_i$.
 \item[(b)] For each pair of distinct vertices $i$ and $j$ of $T$, there exists a minimum-order distinguishing separation for $\mathcal{T}_i$ and $\mathcal{T}_j$ that is displayed by $T$. 
\end{enumerate}

Let $i\in \{1,2,\ldots,n\}$, and let $N(i)$ be the set of vertices of $T$ that are adjacent to $i$.

\begin{claim}
\label{c1}
If $j\in N(i)$, then the separation displayed by the edge $ij$ of $T$ is a minimum-order distinguishing separation for $\mathcal{T}_i$ and $\mathcal{T}_j$.  
\end{claim}

\begin{subproof}
For each edge $e\in E(T)-\{ij\}$ the vertices $i$ and $j$ are in the same component of $T\del e$, so, by Theorem \ref{treeoftangles} (a), the separation $(R,G)$ displayed by $ij$ is the only separation displayed by $T$ that distinguishes $\mathcal{T}_i$ and $\mathcal{T}_j$. Hence, by Theorem \ref{treeoftangles} (b), the separation $(R,G)$ is of minimum order. 
\end{subproof}

Let $\mathcal{T}_i\in \{\mathcal{T}_1, \ldots \mathcal{T}_n\}$. We associate a $\pi_i$-labelled tree with $\mathcal{T}_i$, where $\pi_i$ is a partition of $E$. If $\mathcal{T}_i$ is not a robust tangle, then we let $T_i$ be the tree with a single vertex labelled by $E$. Suppose that $\mathcal{T}_i$ is a robust tangle of order $k$. Let $B\subseteq E$ be the bag labelling $i$ in the tree $T$ and, for each $j\in N(i)$, let $B_j$ be the set displayed by the component of $T\del ij$ containing $j$. 

\begin{claim}
$\pi_i'=\{B_j\ |\ j\in N(i)\}$ is a bag partition for $E-B$ with respect to $\mathcal{T}_i$. 
\end{claim}

\begin{subproof}
 It follows from Theorem \ref{treeoftangles} (a) and \ref{c1} that the members of $\pi_i'$ belong to $\mathcal{T}_i$. By Lemma \ref{bag2} and \ref{c1}, for each $j\in N(i)$, if $(X,Y)$ is a partition of $B_j$, then $\lambda(X)\geq \lambda(B_j)$ or $\lambda(Y)\geq \lambda(B_j)$.
\end{subproof}

Let $\mathcal{T}_{\pi_i'}$ be the bag tangle of order $k$ in $K\circ \pi_i'$ induced by $\mathcal{T}_i$. Then $\mathcal{T}_{\pi_i'}$ is a robust tangle of order $k$ by Lemma \ref{circtangle}. By Theorem \ref{treeofseparations}, there is a maximal partial $k$-tree $T_{\pi_i'}$ for $\mathcal{T}_{\pi_i'}$. Let $T_i$ be the $\pi_i$-labelled tree obtained from $T_{\pi_i'}$ by replacing $b_j$ by $B_j$ for all $j\in N(i)$.

\begin{claim}
\label{maxktreei}
Every $k$-separation of $K$ that is non-sequential with respect to $\mathcal{T}_i$ is $\mathcal{T}_i$-equivalent to some $k$-separation displayed by $T_i$. 
\end{claim}

\begin{subproof}
 Let $(R,G)$ be a $k$-separation of $K$ that is non-sequential with respect to $\mathcal{T}_i$. We may assume, by Lemma \ref{sepeq}, that $(R,G)$ does not cross any bag of $\pi_i'$. Then the $k$-separation $(R_{\pi_i'}, G_{\pi_i'})$ of $K\circ \pi_i'$ induced by $(R,G)$ is non-sequential with respect to $\mathcal{T}_{\pi_i'}$ by Lemma \ref{circtangleeq}. By Theorem \ref{treeofseparations}, the $k$-separation $(R_{\pi_i'}, G_{\pi_i'})$ is $\mathcal{T}_{\pi_i'}$-equivalent to a $k$-separation $(R_{\pi_i'}', G_{\pi_i'}')$ of $K\circ \pi_i'$ that is displayed by the tree $T_{\pi_i'}$. Then the $k$-separation $(R',G')$ of $K$ that induces $(R_{\pi_i'}', G_{\pi_i'}')$ is displayed by the tree $T_i$, and $(R',G')$ is $\mathcal{T}_i$-equivalent to $(R,G)$ by Lemma \ref{eqcorrespondence}.
\end{subproof}

Now, for each tangle $\mathcal{T}_i$, there is a $\pi_i$-labelled tree $T_i$ such that, if $\mathcal{T}_i$ is robust, then $T_i$ displays, up to $\mathcal{T}_i$-equivalence, all of the separations that are non-sequential with respect to $\mathcal{T}_i$. Moreover, for each $j\in N(i)$, the set $B_j$ displayed by the component of $T\del ij$ not containing $i$ is contained some bag of $\pi_i$. 

Consider the forest $F$ with components $T_1,T_2,\ldots, T_n$. We construct a $\pi$-labelled tree $\tau$ from $F$ by performing the following procedure for each edge of $T$. For each edge $ij$ of $T$ and $k\in\{i,j\}$, let $X_k$ be the set displayed by the component of $T\del ij$ that contains the vertex $k$. By the construction of $T_i$ and $T_j$, there is some vertex $v$ of $T_i$ labelled by a bag $B_v$ that contains $X_j$, and some vertex $w$ of $T_j$ labelled by a bag $B_w$ that contains $X_i$. We join $v$ and $w$ by an edge, then relabel $v$ by the bag $B_v-X_j$ and relabel $w$ by the bag $B_w-X_i$. Let $\tau$ denote the resulting tree. 

It follows from the construction that $\tau$ is a $\pi$-labelled tree. For each distinct $i,j\in \{1,2,\ldots,n\}$, since there is some minimum-order distinguishing separation for $\mathcal{T}_i$ and $\mathcal{T}_j$ displayed by an edge of $T$, it follows from the construction that this separation is also displayed by an edge of $\tau$, so (i) holds. Moreover, if $\mathcal{T}_i$ is a robust tangle of order $k$, then every $k$-separation of $K$ that is non-sequential with respect to $\mathcal{T}_i$ is $\mathcal{T}_i$-equivalent to some $k$-separation displayed by $T_i$ by \ref{maxktreei}, and by the construction this separation is displayed by $\tau$.
\end{proof}

\bibliographystyle{acm}
\bibliography{ta2}
\end{document}